\DeclareMathOperator{\Aut}{Aut}
\DeclareMathOperator{\Crit}{Crit}
\DeclareMathOperator{\Diff}{Diff}
\DeclareMathOperator{\Fix}{Fix}
\DeclareMathOperator{\grad}{grad}
\DeclareMathOperator{\Ham}{Ham}
\DeclareMathOperator{\Hess}{Hess}
\DeclareMathOperator{\HM}{HM}
\DeclareMathOperator{\id}{id}
\DeclareMathOperator{\im}{im}
\DeclareMathOperator{\Int}{Int}
\DeclareMathOperator{\ord}{ord}
\DeclareMathOperator{\pr}{pr}
\DeclareMathOperator{\RFC}{RFC}
\DeclareMathOperator{\RFH}{RFH}
\DeclareMathOperator{\Symp}{Symp}
\newtheoremstyle{main} 		             	 		
  	{}	                                     		
  	{}	                                    		
  	{\itshape}			                     		
  	{}        	                             		
  	{\boldmath\bfseries}   	                         		
  	{.}            	                        		
  	{ }           	                         		
  	{\thmname{#1}\thmnumber{ #2}\thmnote{ (#3)}}	
\theoremstyle{main}
\newtheorem{definition}{Definition}[section]
\newtheorem{proposition}{Proposition}[section]
\newtheorem{corollary}{Corollary}[section]
\newtheorem{theorem}{Theorem}[section]
\newtheorem{lemma}{Lemma}[section]
\newtheoremstyle{nonit} 		             	 		
  	{}	                                     		
  	{}	                                    		
  	{}			                     		
  	{}        	                             		
	{\boldmath\bfseries}   	                         		
  	{.}            	                        		
  	{ }           	                         		
  	{\thmname{#1}\thmnumber{ #2}\thmnote{ (#3)}}	
\theoremstyle{nonit}
\newtheorem{remark}{Remark}[section]
\newtheorem{example}{Example}[section]
\newtheoremstyle{ex} 		             	 		
  	{}	                                     		
  	{}	                                    		
  	{}			                     		
  	{}        	                             		
  	{\bfseries\boldmath}   	                         		
  	{.}            	                        		
  	{ }           	                         		
  	{\thmname{#1}\thmnumber{ #2}\thmnote{ (#3)}}	
\theoremstyle{ex}
\begin{document}

\title{On a Theorem by Schlenk}



\author{Yannis B\"ahni\footnote{\href{mailto:yannis.baehni@ifv.gess.ethz.ch}{yannis.baehni@ifv.gess.ethz.ch}, MINT Learning Institute at ETH Zürich}
}

\maketitle

\begin{abstract}
	In this paper we prove a generalisation of Schlenk's theorem about the existence of contractible periodic Reeb orbits on stable, displaceable hypersurfaces in symplectically aspherical, geometrically bounded, symplectic manifolds, to a forcing result for contractible twisted periodic Reeb orbits. We make use of holomorphic curve techniques for a suitable generalisation of the Rabinowitz action functional in the stable case in order to prove the forcing result. As in Schlenk's theorem, we derive a lower bound for the displacement energy of the displaceable hypersurface in terms of the action value of such periodic orbits.  The main application is a forcing result for noncontractible periodic Reeb orbits on quotients of certain symmetric star-shaped hypersurfaces. Either there exist two geometrically distinct noncontractible periodic Reeb orbits or the period of the noncontractible periodic Reeb orbit is small. This theorem can be applied to many physical systems including the H\'enon--Heiles Hamiltonian and Stark--Zeeman systems.  Further applications include a new proof of the well-known fact that the displacement energy is a relative symplectic capacity on $\mathbb{R}^{2n}$ and that the Hofer metric is indeed a metric.
\end{abstract}

\tableofcontents

\newpage

\section{Introduction}\label{sec1}

In \cite{baehni:rfh:2023}, a generalisation of Rabinowitz--Floer homology was constructed. Rabinowitz--Floer homology is the Morse--Bott homology in the sense of Floer associated with the Rabinowitz action functional introduced by Kai Cieliebak and Urs Frauenfelder in 2009. The main application of this generalisation was to prove an existence result for noncontractible periodic Reeb orbits on quotients of certain symmetric star-shaped hypersurfaces in $\mathbb{C}^n$, $n \geq 2$. More precisely, let $\Sigma \subseteq \mathbb{C}^n$ be a compact and connected star-shaped hypersurface invariant under the rotation	
\begin{equation*}
	\varphi \colon \mathbb{C}^n \to \mathbb{C}^n, \quad \varphi(z_1,\dots,z_n) := \left( e^{2\pi i k_1/m}z_1,\dots,e^{2\pi i k_n/m}z_n\right)
\end{equation*}
\noindent for some even $m \geq 2$ and $k_1,\dots,k_n \in \mathbb{Z}$ coprime to $m$. Then $\Sigma/\mathbb{Z}_m$ admits a noncontractible periodic Reeb orbit generating the fundamental group $\pi_1(\mathbb{S}^{2n - 1}/\mathbb{Z}_m) \cong \mathbb{Z}_m$. For a proof see \cite[Theorem~1.2]{baehni:rfh:2023} and \cite[Theorem~1.1]{abreu:symmetric:2022} for the more general result, removing the restriction of $m$ being even. The existence of noncontractible periodic Reeb
orbits on lens spaces is extremely relevant and attracts much attention in celestial mechanics as mentioned in \cite[Introduction]{liu:RP:2022} or \cite{pedro:reeb:2016}. We quickly recall the setup for the proof of this result. Let $(W,\lambda)$ be a connected Liouville domain with connected boundary $\partial W$ and consider a Liouville automorphism $\varphi \in \Aut(W,\lambda)$, that is, $\varphi \in \Diff(W)$ is of finite order and there exists a unique function $f_\varphi \in C^\infty(\Int W)$ such that $\varphi^*\lambda - \lambda = df_\varphi$. The main step was to construct a homology theory for the \emph{twisted Rabinowitz action functional}
\begin{equation*}
    \mathscr{A}^H_\varphi \colon \mathscr{L}_\varphi M \times \mathbb{R} \to \mathbb{R}, \qquad \mathscr{A}^H_\varphi(\gamma,\tau) := \int_0^1 \gamma^*\lambda - \tau \int_0^1 H(\gamma(t))dt - f_\varphi(\gamma(0))
\end{equation*}
\noindent on the completion $(M,\lambda)$ of $(W,\lambda)$, where
\begin{equation*}
    \mathscr{L}_\varphi M := \{\gamma \in C^\infty(\mathbb{R},M) : \gamma(t + 1) = \varphi(\gamma(t))\> \forall t \in \mathbb{R}\}
\end{equation*}
\noindent denotes the \emph{twisted loop space of $M$ and $\varphi$}. Twisted loops play a significant role in physical systems with symmetries, see for example \cite[Section~6.2]{cieliebak:frozen:2023}. Consider the chain complex $\RFC^\varphi(\partial W,M)$ generated by a suitable Morse function on the critical manifold $\Crit(\mathscr{A}^H_\varphi)$, where
\begin{equation*}
    (\gamma,\tau) \in \Crit(\mathscr{A}^H_\varphi) \qquad \Leftrightarrow \qquad \begin{cases}
        \gamma \in \mathscr{L}_\varphi \partial W,\\
        \dot{\gamma}(t) = \tau R(\gamma(t)) \> \forall t \in \mathbb{R},
    \end{cases}
\end{equation*}
\noindent with $R \in \mathfrak{X}(\partial W)$ denoting the Reeb vector field. We then define twisted Rabinowitz-Floer homology as the Morse–Bott homology with coefficients in $\mathbb{Z}_2$  by
    \begin{equation*}
        \RFH^\varphi(\partial W,M) := \HM(\mathscr{A}^H_\varphi) = \frac{\ker \partial \colon \RFC^\varphi(\partial W, M) \to \RFC^\varphi(\partial W, M)}{\im \partial \colon \RFC^\varphi(\partial W, M) \to \RFC^\varphi(\partial W, M)},
    \end{equation*}
    \noindent where the boundary map $\partial$ counts twisted negative gradient flow lines modulo two with respect to a suitable $d\lambda$-compatible almost complex structure on $M$. This homology theory has the following crucial properties.
\begin{enumerate}
	\item The semi-infinite dimensional Morse--Bott homology $\RFH^\varphi(\partial W, M)$ is well-defined. Moreover, twisted Rabinowitz--Floer homology is invariant under twisted homotopies of Liouville domains.
    \item Twisted Rabinowitz--Floer homology is indeed a generalisation of the standard Rabinowitz--Floer homology $\RFH(\partial W,M)$ defined in \cite{cieliebakfrauenfelder:rfh:2009}, as
    \begin{equation*}
        \RFH^{\id_W}(\partial W, M) \cong \RFH(\partial W,M).
    \end{equation*}
	\item If $\partial W$ is simply connected and does not admit any nonconstant twisted periodic Reeb orbits, then
    \begin{equation*}
           \RFH^\varphi_*(\partial W,M) \cong \operatorname{H}_*(\Fix(\varphi\vert_{\partial W});\mathbb{Z}_2).
        \end{equation*}
        Note that $\Fix(\varphi)$ is a symplectic submanifold of $M$ by \cite[Lemma~5.5.7]{mcduffsalamon:st:2017}.
	\item If $\partial W$ is displaceable by a compactly supported Hamiltonian symplectomorphism in the completion $(M,\lambda)$, then
        \begin{equation*}
            \RFH^\varphi(\partial W,M) \cong 0.
        \end{equation*}
	\end{enumerate}
	For a proof see \cite[Theorem~1.1]{baehni:rfh:2023}. Note that there are two possible ways for proving property 4: either one shows that the norm of the gradient of a perturbed version of the twisted Rabinowitz action functional is uniformly bounded from below as in \cite[Lemma~3.9]{cieliebakfrauenfelder:rfh:2009}, or one generalises leaf-wise intersection points following \cite{albersfrauenfelder:rfh:2010}. A direct consequence of properties 3 and 4 is the following observation as in \cite[Corollary~1.5]{cieliebakfrauenfelder:rfh:2009}. Suppose that $\partial W$ is Hamiltonianly displaceable in the completion $(M,\lambda)$ and simply connected. If $\Fix(\varphi\vert_{\partial W}) \neq \varnothing$, then $\partial W$ does admit a twisted periodic Reeb orbit. Indeed, if there does not exist any twisted periodic Reeb orbit on the boundary $\partial W$, we compute using property 3
\begin{equation*}
	\RFH^\varphi(\partial W,M) \cong \operatorname{H}(\Fix(\varphi\vert_{\partial W});\mathbb{Z}_2) = \bigoplus_{j \geq 0} \operatorname{H}_j(\Fix(\varphi\vert_{\partial W});\mathbb{Z}_2) \neq 0,
\end{equation*}
\noindent contradicting property 4. However, if $\Fix(\varphi\vert_{\partial W}) = \varnothing$, then one cannot directly conclude the existence of a twisted periodic Reeb orbit on $\partial W$. This is for example the case for the rotation $\varphi \colon \mathbb{C}^n \to \mathbb{C}^n$ from the beginning. So the best one can hope for is some kind of forcing result to hold. More precisely, if we know that there exists a sufficiently well-behaved twisted periodic Reeb orbit, then this \emph{forces} the existence of another one. The above observation is already a forcing result, as $\Fix(\varphi\vert_{\partial W})$ is precisely the set of all constant twisted periodic Reeb orbits on the contact manifold $\partial W$.

\newpage

\section{Results}\label{sec2}
\subsection{Preliminaries on Twisted Stable Hypersurfaces}
\begin{definition}[{Stable Hypersurface, \cite[p.~1774]{cieliebakfrauenfelderpaternain:mane:2010}}]
	Let $(M,\omega)$ be a connected symplectic manifold. A \emph{stable hypersurface} in $(M,\omega)$ is a compact and connected hypersurface $\Sigma \subseteq M$ such that the following conditions hold:
	\begin{enumerate}
		\item $\Sigma$ is separating, that is, $M \setminus \Sigma$ consists of two connected components $M^\pm$, where $M^-$ is bounded and $M^+$ is unbounded.
		\item There exists a vector field $X$ in a neighbourhood of $\Sigma$ such that $X$ is outward-pointing to $\Sigma \cup M^-$ and $\ker \omega\vert_\Sigma \subseteq \ker L_X\omega\vert_\Sigma$.
	\end{enumerate}
 We write $(\Sigma,\omega\vert_\Sigma,\lambda)$ for a stable hypersurface, where the stabilising form $\lambda \in \Omega^1(\Sigma)$ is defined by $\lambda := i_X\omega\vert_\Sigma$.
\end{definition}

\begin{definition}[Twisted Stable Hypersurface]
	\label{def:twisted_stable_hypersurface}
	Let $(\Sigma,\omega\vert_\Sigma, \lambda)$ be a stable hypersurface in a connected symplectic manifold $(M,\omega)$ and $\varphi \in \Symp(M,\omega)$. We say that $\Sigma$ is \emph{twisted} by $\varphi$, if $\varphi(\Sigma) = \Sigma$, $\varphi$ is of finite order and $\varphi^*X = X$.
\end{definition}

\begin{example}[Star-Shaped Hypersurfaces]
    \label{ex:star-shaped}
    Consider the Liouville automorphism
    \begin{equation*}
        \varphi \colon \mathbb{C}^n \to \mathbb{C}^n, \qquad \varphi(z_1,\dots,z_n) := \left(e^{2\pi i k_1/m}z_1,\dots, e^{2\pi ik_n/m}z_n\right)
    \end{equation*}
    \noindent for $m \geq 2$ an integer and $k_1,\dots,k_n \in \mathbb{Z}$ coprime to $m$. Let $f \in C^\infty(\mathbb{S}^{2n - 1})$ be a positive function such that $f  \circ \varphi = f$. Then the star-shaped hypersurface
    \begin{equation*}
        \Sigma_f = \{f(z)z : z \in \mathbb{S}^{2n - 1}\} \subseteq \mathbb{C}^n
    \end{equation*}
    \noindent is a contact manifold with $\varphi$-invariant contact form $\lambda\vert_{\Sigma_f}$, where
    \begin{equation*}
        \lambda := \frac{1}{2}\sum_{j = 1}^n(y_j dx_j - x_j dy_j) = \frac{i}{4}\sum_{j = 1}^n (\overline{z}_j dz_j - z_j d\overline{z}_j)
    \end{equation*}
    \noindent with complex coordinates $z_j  = x_j + iy_j$. Indeed, by \cite[Lemma~12.2.2]{frauenfelderkoert:3bp:2018}, we have that
    \begin{equation*}
        X_{H_f}\vert_{\Sigma_f} \in \ker d\lambda\vert_{\Sigma_f} \qquad \text{and} \qquad \lambda(X_{H_f})\vert_{\Sigma_f} = 1
    \end{equation*}
    \noindent for the defining Hamiltonian function
    \begin{equation*}
        H_f \colon \mathbb{C}^n \setminus \{0\} \to \mathbb{R}, \qquad H_f(z) := \frac{\|z\|^2}{f(z/\|z\|^2)} - 1.
    \end{equation*}
    Hence $(\Sigma_f,\lambda\vert_{\Sigma_f})$ is a contact manifold as the Liouville vector field
    \begin{equation*}
        X := \frac{1}{2}\sum_{j = 1}^n\left(x_j\frac{\partial}{\partial x_j} + y_j \frac{\partial}{\partial y_j}\right) \in \mathfrak{X}(\mathbb{R}^{2n})
    \end{equation*}
    \noindent satisfies $i_X d\lambda = \lambda$ and is outward-pointing as
    \begin{equation*}
        dH_f(X)\vert_{\Sigma_f} = d\lambda(X,X_{H_f})\vert_{\Sigma_f} = \lambda(X_{H_f})\vert_{\Sigma_f} = 1.
    \end{equation*}
    Finally, we conclude that
    \begin{equation*}
        X_{H_f}\vert_{\Sigma_f} = R_f \in \mathfrak{X}(\Sigma_f)
    \end{equation*}
    \noindent is the Reeb vector field. 
\end{example}

\begin{example}[{Magnetic Torus, \cite[Section~6.1]{cieliebakfrauenfelderpaternain:mane:2010}}]
	\label{ex:magnetic_torus}
	Let $\mathbb{T}^n$ be the standard flat torus for $n \geq 2$ and let $J \colon \mathbb{R}^n \to \mathbb{R}^n$ be  an antisymmetric nonzero linear map. Define $\rho \in \Omega^2(\mathbb{T}^n)$ by setting $\rho(\cdot,\cdot) := \langle \cdot, J \cdot \rangle$ and denote by $\omega_\rho = dp \wedge dq + \pi^*\rho$ the magnetic symplectic form on $T^*\mathbb{T}^n \cong \mathbb{T}^n \times \mathbb{R}^n$. For an energy value $k \in \mathbb{R}$ set $\Sigma_k := H^{-1}(k)$ for the mechanical Hamiltonian function 
	\begin{equation*}
		H(q,p) := \frac{1}{2} \|p\|^2 \qquad \forall (q,p) \in \mathbb{T}^n \times \mathbb{R}^n.
	\end{equation*}
	Define $A := (J\vert_{\im J})^{-1}$ and $\alpha \in \Omega^1(\im J)$ by
	\begin{equation*}
		\alpha_x(v) := \frac{1}{2}\langle x,Av\rangle.
	\end{equation*}
	Then $\Sigma_k$ is a displaceable stable Hamiltonian manifold for every $k > 0$ by \cite[Proposition~6.3]{cieliebakfrauenfelderpaternain:mane:2010}. The stabilising form $\lambda$ on $\Sigma_k$ is given by
	\begin{equation}
		\label{eq:stabilising_form}
		\lambda = f^*(pdq) + (\pr_\parallel \circ \pr)^*\alpha,
	\end{equation}
	\noindent where 
	\begin{equation*}
		\pr_\perp \colon \mathbb{R}^n \to \ker J, \qquad \pr_\parallel \colon \mathbb{R}^n \to \im J, \qquad \pr \colon \mathbb{T}^n \times \mathbb{R}^n \to \mathbb{R}^n
	\end{equation*}
	\noindent denote the projections with respect to the orthogonal splitting
	\begin{equation*}
		\mathbb{R}^n = \ker J \oplus \im J,
	\end{equation*}
	\noindent and
	\begin{equation*}
		f \colon \mathbb{T}^n \times \mathbb{R}^n \to \mathbb{T}^n \times \mathbb{R}^n, \qquad f(q,p) := \left(q,\pr_\perp(p)\right).
	\end{equation*}
    Let $\varphi \in \Diff(\mathbb{T}^n)$ be an isometry of finite order satisfying 
	\begin{equation}
		\label{eq:twist_condition}
		D\varphi \circ J = J \circ D\varphi
	\end{equation}
	\noindent and consider the cotangent lift
	\begin{equation*}
		D\varphi^\dagger \colon \mathbb{T}^n \times \mathbb{R}^n \to \mathbb{T}^n \times \mathbb{R}^n, \qquad D\varphi^\dagger(q,p) = \left(\varphi(q),\left(D\varphi^{-1}(q)\right)^T p\right).
	\end{equation*}
	Then clearly $\varphi(\Sigma_k) = \Sigma_k$ as $\varphi$ is an isometry and $D\varphi^\dagger$ is of finite order as $\varphi$ is. Moreover, we have that $D\varphi^\dagger \in \Symp(T^*\mathbb{T}^n,\omega_\rho)$, as $D\varphi^\dagger \in \Symp(T^*\mathbb{T}^n,dp \wedge dq)$ and $D\varphi^\dagger$ preserves $\rho$ by assumption \eqref{eq:twist_condition}. Lastly, we have that $\varphi^*\lambda = \lambda$ by considering formula \eqref{eq:stabilising_form} together with assumption \eqref{eq:twist_condition}, and thus also $\varphi^*X = X$ by \cite[Proposition~4.2]{abbondandolo:lagrangian:2013}. 
\end{example}

\begin{definition}[{Hofer Norm, \cite[p.~466]{mcduffsalamon:st:2017}}]
	Let $(M,\omega)$ be a symplectic manifold. Define the \emph{Hofer norm} of $F \in C^\infty_c(M \times [0,1])$ by
	\begin{equation*}
		\|F\| := \|F\|_+ + \|F\|_-,
	\end{equation*}
	\noindent where
	\begin{equation*}
		\|F\|_+ := \int_0^1 \max_{x \in M} F_t(x)dt \qquad \text{and} \qquad \|F\|_- := -\int_0^1 \min_{x \in M} F_t(x)dt. 
	\end{equation*}
\end{definition}

\begin{definition}[{Displacement Energy, \cite[p.~469]{mcduffsalamon:st:2017}}]
    \label{def:e}
	Let $(M,\omega)$ be a symplectic manifold. For a compact subset $A \subseteq M$ define the \emph{displacement energy} of $A$ by
	\begin{equation*}
		e(A) := \inf_{\substack{F \in C^\infty_c(M \times [0,1])\\\varphi_F(A) \cap A = \varnothing}}\|F\|,
	\end{equation*}
    \noindent where $\varphi_F := \phi^{X_F}_1$ denotes the time-1-map of the smooth flow of the time-dependent Hamiltonian vector field $X_{F_t}$.
\end{definition}

\begin{example}[{\cite[p.~189]{paternain:e:2008}}]
    Let $M$ be a compact manifold without boundary. Then we have that $e(M) = +\infty$ in $(T^*M,dp \wedge dq)$ for the zero-section $M$ in $T^* M$. However, if $\rho \neq 0$ for a magnetic cotangent bundle $(T^*M, \omega_\rho)$ and $\chi(M) = 0$ for the Euler-characteristic $\chi$ of $M$, then $e(M) < +\infty$ is finite. For more examples of nondisplaceable hypersurfaces in cotangent bundles see \cite[Theorem~1.13]{merry:rfh:2014}.
\end{example}

\begin{definition}[{Symplectic Asphericity, \cite[p.~302]{mcduffsalamon:J-holomorphic_curves:2012}}]
	A connected symplectic manifold $(M,\omega)$ is said to be \emph{symplectically aspherical}, if
	\begin{equation*}
		\int_{\mathbb{S}^2} f^*\omega = 0 \qquad \forall f \in C^\infty(\mathbb{S}^2,M).
	\end{equation*}
 Equivalently, $(M,\omega)$ is symplectically aspherical if and only if for the de-Rham-homology class $[\omega]\vert_{\pi_2(M)} = 0 \in \mathrm{H}_{\mathrm{dR}}^2(M;\mathbb{R})$.
\end{definition}

\begin{example}[Magnetic Torus]
    \label{ex:aspherical}
    The magnetic torus $(T^*\mathbb{T}^n,\omega_\rho)$ from Example \ref{ex:magnetic_torus} is symplectically aspherical as $\omega_\rho = d\lambda_\theta$ is exact with
    \begin{equation*}
        \lambda_\theta := pdq + \pi^*\theta, \qquad \theta_q(\cdot) = \frac{1}{2}\langle q, J\cdot\rangle \in T^*_q\mathbb{T}^n
    \end{equation*}
    \noindent for all $q \in \mathbb{T}^n$ by \cite[Lemma~6.2]{cieliebakfrauenfelderpaternain:mane:2010}, where $\pi \colon T^*\mathbb{T}^n \to \mathbb{T}^n$ denotes the projection. Alternatively, the magnetic cotangent bundle $(T^*\mathbb{T}^n,\omega_\rho)$ is symplectically aspherical as we have $\pi_2(T^*\mathbb{T}^n) \cong \pi_2(\mathbb{T}^n) \times \pi_2(\mathbb{R}^n) = 0$.
\end{example}

\begin{definition}[Contractible Twisted Loop Space]
    Let $(M,\omega)$ be a symplectic manifold and $\varphi \in \Symp(M,\omega)$ of finite order. A loop $v \in C^\infty(\mathbb{T},M)$, $\mathbb{T} := \mathbb{R}/\mathbb{Z}$, is said to be a \emph{contractible twisted periodic loop}, if there exists $\gamma \in \mathscr{L}_\varphi M$ such that
    \begin{equation*}
        v(t) = \gamma(\ord(\varphi)t) \qquad \forall t \in \mathbb{T},
    \end{equation*}
    \noindent and a filling $\bar{v} \in C^\infty(\mathbb{D},M)$ on the unit disc
    \begin{equation*}
        \mathbb{D} := \{z \in \mathbb{C} : |z| = 1\},
    \end{equation*}
    \noindent such that $\bar{v}(e^{2\pi i t}) = v(t)$ for all $t \in \mathbb{T}$. We denote the space of all contractible twisted periodic loops of $M$ and $\varphi$ by $\Lambda_\varphi M$.
\end{definition}

\begin{definition}[Twisted Rabinowitz Action Functional]
    Let $(\Sigma,\omega\vert_\Sigma,\lambda)$ be a twisted stable hypersurface in a symplectically aspherical symplectic manifold $(M,\omega)$. For a defining Hamiltonian function $H$ for $\Sigma$ with $H \circ \varphi = H$, we define the \emph{twisted Rabinowitz action functional}
    \begin{equation*}
        \mathscr{A}^H_\varphi \colon \Lambda_\varphi M \times \mathbb{R} \to \mathbb{R}, \qquad \mathscr{A}^H_\varphi(v,\tau) := \frac{1}{\ord(\varphi)}\int_{\mathbb{D}} \bar{v}^*\omega - \tau \int_0^1 H(v(t))dt.
    \end{equation*}
\end{definition}

\begin{remark}[$\Crit(\mathscr{A}^H_\varphi)$]
    Let $X \in \mathfrak{X}(\gamma)$ be a twisted variation, that is, $X$ is a vector field along $\gamma$ and satisfies the condition
    \begin{equation*}
        X(t + 1) = D\varphi(X(t)) \qquad \forall t \in \mathbb{R}.
    \end{equation*}
    Then a routine computation shows that 
    \begin{equation*}
        (v,\tau) \in \Crit(\mathscr{A}^H_\varphi) \qquad \Leftrightarrow \qquad \begin{cases}
            \gamma \in \mathscr{L}_\varphi \Sigma,\\
            \dot{\gamma}(t) = \tau R(\gamma(t)) \> \forall t \in [0,1],
        \end{cases}
    \end{equation*}
    \noindent where $R \in \mathfrak{X}(\Sigma)$ is the stable Reeb vector field. If $J$ is a $\varphi$-invariant almost complex structure compatible with $\omega$, then the gradient $\grad_J \mathscr{A}^H_\varphi \in \mathfrak{X}(\Lambda_\varphi M \times \mathbb{R})$ with respect to the $L^2$-metric
    \begin{equation*}
        m\left((X,\eta),(Y,\sigma)\right) := \int_0^1 \omega(JX(t),Y(t))dt + \eta\sigma \qquad \forall (X,\eta), (Y,\sigma) \in T_{(v,\tau)} \Lambda_\varphi M \times \mathbb{R},
    \end{equation*}
    \noindent and $(v,\tau) \in \Lambda_\varphi M \times \mathbb{R}$, is given by
    \begin{equation*}
        \grad_J \mathscr{A}^H_\varphi\vert_{(v,\tau)}(t) = \begin{pmatrix}
            J(\dot{\gamma}(\ord(\varphi)t) - \tau X_H(v(t)))\\
            \displaystyle - \int_0^1 H \circ v
        \end{pmatrix} \qquad \forall t \in \mathbb{T}.
    \end{equation*}
    Hence $(u,\tau) \in C^\infty(\mathbb{R},\Lambda_\varphi M \times \mathbb{R})$ is a twisted negative gradient flow line, if the elliptic partial differential equations or \emph{twisted Rabinowitz--Floer equations}
    \begin{equation*}
        \partial_s u(s,t) + J\left(\partial_t \gamma(s,\ord(\varphi)t) - \tau(s) X_H(u(s,t))\right) = 0 \quad \text{and} \quad  \partial_s \tau(s) = \int_0^1 H(u(s,t))dt
    \end{equation*}
    \noindent hold for all $(s, t) \in \mathbb{R} \times \mathbb{T}$.
\end{remark}

\begin{example}[Magnetic Torus]
	\label{ex:twisted_Reeb}
	Consider the displaceable twisted stable hypersurface $\Sigma_k \subseteq (T^*\mathbb{T}^n,\omega_\rho,H)$ as in Example \ref{ex:magnetic_torus}. A point $(q,p) \in \Sigma_k$ gives rise to a twisted periodic Reeb orbit if and only if
	\begin{equation*}
		\int_0^\tau e^{sJ}p ds + q = \varphi(q), \quad e^{\tau J}p = \left(D\varphi^{-1}(q)\right)^Tp, \quad \text{and} \quad \|p\|^2 = 2k.
	\end{equation*}
	A computation similar to \cite[p.~1843]{cieliebakfrauenfelderpaternain:mane:2010} shows
	\begin{equation*}
		\mathscr{A}^H_\varphi(v,\tau) = \ord(\varphi)k\tau.
	\end{equation*}
\end{example}

\begin{definition}[{Morse--Bott Component, \cite[p.~86]{albersfrauenfelder:rfh:2010}}]
    \label{def:mb_component}
	Let $\mathscr{A} \colon \mathscr{E} \to \mathbb{R}$ be a smooth functional. A subset $C \subseteq \Crit \mathscr{A}$ is called a \emph{Morse--Bott component}, if
	\begin{enumerate}
		\item $C$ is an action-constant submanifold of $\mathscr{E}$.
		\item $T_xC = \ker \Hess \mathscr{A}(x)$ for all $x \in C$ for the Hessian $\Hess \mathscr{A}$ of $\mathscr{A}$.
	\end{enumerate}
\end{definition}

\begin{example}[$\Fix(\varphi\vert_\Sigma)$]
    \label{ex:fix}
    Let $\Sigma$ be a twisted stable hypersurface in a symplectically aspherical symplectic manifold. Then $\Fix(\varphi\vert_\Sigma) \subseteq \Crit \mathscr{A}^H_\varphi$ is a Morse--Bott component. Indeed, by \cite[Proposition~2.23]{baehni:rfh:2023} we have that
	\begin{equation*}
		\ker \Hess \mathscr{A}^H_\varphi\vert_{(x,0)} \cong \ker (D\varphi_x - \id_{T_x \Sigma}) = T_x\Fix(\varphi\vert_\Sigma)
	\end{equation*}
	\noindent for all $x \in \Fix(\varphi\vert_\Sigma)$.
\end{example}

\begin{definition}[{\cite[p.~1768]{cieliebakfrauenfelderpaternain:mane:2010}}]
    A symplectic manifold $(M,\omega)$ is called \emph{geometrically bounded}, if there exists an $\omega$-compatible almost complex structure $J$ and a complete Riemannian metric such that the following conditions hold.
    \begin{enumerate}
		\item There are constants $C_0,C_1 > 0$ with
        \begin{equation*}
            \omega(Jv,v) \geq C_0\|v\|^2 \qquad \text{and} \qquad |\omega(u,v)| \leq C_1\|u\|\|v\|
        \end{equation*}
        \noindent for all $u,v \in T_x M$ and $x \in M$.
        \item The sectional curvature of the metric is bounded above, and its injectivity radius is bounded away from zero.
    \end{enumerate}
\end{definition}

\begin{example}[{\cite[p.~1768]{cieliebakfrauenfelderpaternain:mane:2010}}]
	\label{ex:geometrically_bounded}
	Magnetic cotangent bundles are geometrically bounded.
\end{example}

\subsection{A Forcing Theorem for Twisted Periodic Reeb Orbits}
Let $(W,\lambda)$ be a connected Liouville domain with connected boundary $\Sigma := \partial W$. Let $(M,\lambda)$ be the completion of $(W,\lambda)$ and $\varphi \in \Aut(W,\lambda)$ a Liouville automorphism, that is, $\varphi \in \Diff(W)$ is a diffeomorphism of finite order such that $\varphi^*\lambda = \lambda$. In this setup, the kernel of the twisted Rabinowitz action functional $\mathscr{A}^H_\varphi$ admits the canonical description
\begin{equation*}
    \ker \Hess \mathscr{A}^H_\varphi\vert_{(v,\tau)} \cong \ker \left(D^\xi(\phi_{-\tau}^R \circ \varphi)\vert_{v(0)} - \id\vert_{\xi_{v(0)}}\right) \oplus \langle R(v(0)) \rangle
\end{equation*}
\noindent by \cite[Proposition~2.23]{baehni:rfh:2023}, where $\xi := \ker \lambda\vert_\Sigma$ denotes the contact distribution.

\begin{definition}[{Transversal Nondegeneracy, \cite[Definition~7.3.1]{frauenfelderkoert:3bp:2018}}]
    Let $(M,\lambda)$ be the completion of a connected Liouville domain $(W,\lambda)$. A contractible twisted periodic Reeb orbit $(v,\tau) \in \Crit(\mathscr{A}_\varphi^H)$ is said to be \emph{nondegenerate}, if
    \begin{equation*}
        \ker \left(D^\xi(\phi_{-\tau}^R \circ \varphi)\vert_{v(0)} - \id\vert_{\xi_{v(0)}}\right) = \{0\}.
    \end{equation*}
\end{definition}

\begin{theorem}
	\label{thm:multiplicity}
	Let $\Sigma \subseteq \mathbb{C}^n$, $n \geq 2$, be a compact and connected star-shaped hypersurface invariant under the rotation	
	\begin{equation*}
		\varphi \colon \mathbb{C}^n \to \mathbb{C}^n, \quad \varphi(z_1,\dots,z_n) := \left(e^{2\pi i k_1/m}z_1,\dots,e^{2\pi i k_n/m}z_n\right)
	\end{equation*}
	\noindent for some $m \geq 2$ and $k_1,\dots,k_n \in \mathbb{Z}$ coprime to $m$. Assume that there exists a nondegenerate noncontractible simple periodic Reeb orbit $(\gamma_0,\tau_0)$ on $\Sigma/\mathbb{Z}_m$. Then there does exist a noncontractible periodic Reeb orbit $(\gamma,\tau)$ on $\Sigma/\mathbb{Z}_m$ such that
		\begin{equation*}
		0 < \tau - \tau_0 \leq e(\Sigma).
	\end{equation*}	
    Consequently, we have two cases.
    \begin{enumerate}
        \item If $\gamma$ is not an interate of $\gamma_0$, then $\Sigma/\mathbb{Z}_m$ admits two geometrically distinct noncontractible periodic Reeb orbits.
        \item If $\gamma$ is a $p$-fold iterate of $\gamma_0$, then the period $\tau_0$ is small in the sense of 
        \begin{equation*}
            0 < \tau_0 \leq \frac{1}{p - 1}e(\Sigma).
        \end{equation*}
    \end{enumerate}
\end{theorem}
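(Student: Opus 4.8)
The plan is to deduce the theorem from the forcing theorem for contractible twisted periodic Reeb orbits on displaceable twisted stable hypersurfaces — the twisted generalisation of Schlenk's theorem — applied to $\Sigma$ equipped with an appropriate cyclic twist. First I would check the hypotheses. The star-shaped hypersurface $\Sigma \subseteq \mathbb{C}^n$ is a contact, hence stable, hypersurface with stabilising form the restriction of $\lambda$; the radial Liouville vector field $X$ of Example \ref{ex:star-shaped} satisfies $\varphi^*X = X$ because $\varphi$ is unitary, and $\varphi$ has finite order $m$ because each $k_j$ is coprime to $m$, so $\Sigma$ is twisted by $\varphi$ in the sense of Definition \ref{def:twisted_stable_hypersurface}. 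The standard symplectic form on $\mathbb{C}^n$ is exact, hence symplectically aspherical, $\mathbb{C}^n$ is geometrically bounded, and $\Sigma$, being compact, is displaceable in $\mathbb{C}^n$ with $e(\Sigma) < +\infty$. Writing $c \in \pi_1(\Sigma/\mathbb{Z}_m) \cong \mathbb{Z}_m$ for the nontrivial class of $\gamma_0$, I would replace $\varphi$ by $\varphi^c$ — again a rotation of the required form, now generating a cyclic group of order $m' := m/\gcd(c,m)$ — so that $\gamma_0$ lifts to a closed Reeb orbit on $\Sigma/\langle\varphi^c\rangle$ representing a generator of $\pi_1(\Sigma/\langle\varphi^c\rangle)$. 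This does not affect the conclusion: the covering $\Sigma/\langle\varphi^c\rangle \to \Sigma/\mathbb{Z}_m$ is injective on $\pi_1$, so a loop noncontractible upstairs remains noncontractible on $\Sigma/\mathbb{Z}_m$, while iteration numbers and geometric distinctness of Reeb orbits are covering-independent.

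Next I would make the dictionary between periodic Reeb orbits and critical points of $\mathscr{A}^H_{\varphi^c}$ precise. Lifting $\gamma_0$ to a Reeb trajectory $\tilde\gamma_0 \in \mathscr{L}_{\varphi^c}\Sigma$ and setting $v_0(t) := \tilde\gamma_0(\ord(\varphi^c)t)$, with any filling, yields a contractible twisted periodic Reeb orbit $(v_0,\tau_0) \in \Crit(\mathscr{A}^H_{\varphi^c})$; conversely every critical point of $\mathscr{A}^H_{\varphi^c}$ descends to a periodic Reeb orbit on $\Sigma/\langle\varphi^c\rangle$ whose class is the generator, hence noncontractible. Transversal nondegeneracy of $(\gamma_0,\tau_0)$ is exactly nondegeneracy of $(v_0,\tau_0)$, via the identification
\begin{equation*}
    \ker \Hess \mathscr{A}^H_{\varphi^c}\vert_{(v_0,\tau_0)} \cong \ker \left(D^\xi(\phi_{-\tau_0}^R \circ \varphi^c)\vert_{v_0(0)} - \id\vert_{\xi_{v_0(0)}}\right) \oplus \langle R(v_0(0)) \rangle
\end{equation*}
of \cite[Proposition~2.23]{baehni:rfh:2023}. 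Finally, since $\lambda\vert_\Sigma(R) \equiv 1$ by Example \ref{ex:star-shaped}, Stokes' theorem gives, for any critical point $(v,\tau)$ of $\mathscr{A}^H_{\varphi^c}$,
\begin{equation*}
    \mathscr{A}^H_{\varphi^c}(v,\tau) = \frac{1}{\ord(\varphi^c)}\int_{\mathbb{D}}\bar v^*\omega = \frac{1}{\ord(\varphi^c)}\int_0^1 v^*\lambda = \tau,
\end{equation*}
so action values coincide with the Reeb periods of the corresponding orbits on $\Sigma/\langle\varphi^c\rangle$.

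With this in place I would apply the forcing theorem to the nondegenerate critical point $(v_0,\tau_0)$: it produces a further critical point $(v,\tau) \in \Crit(\mathscr{A}^H_{\varphi^c})$ with $0 < \mathscr{A}^H_{\varphi^c}(v,\tau) - \mathscr{A}^H_{\varphi^c}(v_0,\tau_0) \leq e(\Sigma)$. By the action formula, and pushing the corresponding orbit forward along the covering onto $\Sigma/\mathbb{Z}_m$, this is a noncontractible periodic Reeb orbit $(\gamma,\tau)$ on $\Sigma/\mathbb{Z}_m$ with $0 < \tau - \tau_0 \leq e(\Sigma)$, which is the displayed inequality. The dichotomy is then elementary: if $\gamma$ is not a geometric iterate of $\gamma_0$, then since $\gamma_0$ is simple the orbits $\gamma$ and $\gamma_0$ have distinct images and are geometrically distinct noncontractible periodic Reeb orbits, which is case~1; and if $\gamma = \gamma_0^p$ for a positive integer $p$, then $\tau = p\tau_0$, the strict inequality $\tau - \tau_0 > 0$ forces $p \geq 2$, and $(p-1)\tau_0 = \tau - \tau_0 \leq e(\Sigma)$ together with $\tau_0 > 0$ gives $0 < \tau_0 \leq \frac{1}{p-1}e(\Sigma)$, which is case~2.

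The substantive input is the forcing theorem itself, whose proof rests on the holomorphic curve analysis of the twisted Rabinowitz action functional; here it is used as a black box. Within the present deduction the only genuinely delicate points are the homotopy-theoretic bookkeeping around the reduction to the twist $\varphi^c$ — in particular that noncontractibility survives the pushforward along the intermediate covering, which is exactly where the injectivity on $\pi_1$ enters — and the normalisation identifying $\mathscr{A}^H_{\varphi^c}$-values of critical points with Reeb periods on the quotient, which is where the $1/\ord(\varphi^c)$ factor in the functional and the relation $\lambda\vert_\Sigma(R) \equiv 1$ are used. I expect the first of these to be the main, if modest, obstacle.
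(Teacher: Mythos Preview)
Your approach is correct and is exactly the paper's: the paper derives Theorem~\ref{thm:multiplicity} by declaring that it ``immediately follows'' from the Forcing Theorem~\ref{thm:forcing}, and you carry out precisely this deduction, supplying the verification of the hypotheses (twisted stability, asphericity, geometric boundedness, displaceability), the identification of the nondegenerate orbit with a Morse--Bott circle, and the action--period computation that the paper leaves implicit. Your passage through the power $\varphi^c$ to handle the case where $[\gamma_0]$ does not generate $\pi_1(\Sigma/\mathbb{Z}_m)$ is a careful point the paper does not spell out.
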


\begin{remark}
    We cannot conclude the existence of two geometrically distinct noncontractible periodic Reeb orbits as in \cite[Theorem~1.2]{abreu:symmetric:2022} from Theorem \ref{thm:multiplicity} even under the additional assumption that $\Sigma$ is dynamically convex. Indeed, the sphere $\Sigma = \mathbb{S}^{2n - 1}$ is dynamically convex by the Hofer--Wysocki--Zehnder Theorem \cite[Theorem~12.2.1]{frauenfelderkoert:3bp:2018}, and both cases do occur there.
\end{remark}

\begin{example}[{The H\'enon--Heiles Hamiltonian, \cite[Section~2]{schneider:lens:2020}}]
    \label{ex:henon}
    Consider the mechanical Hamiltonian function
    \begin{equation*}
        H \colon \mathbb{R}^4 \to \mathbb{R}, \qquad H(q_1,q_2,p_1,p_2) := \frac{1}{2}\left(\|p\|^2 + \|q\|^2\right) + q_1^2 q_2 - \frac{1}{3}q_2^3. 
    \end{equation*}
    This Hamiltonian function is known as the \emph{H\' enon--Heiles Hamiltonian}. On $\mathbb{R}^4 \cong \mathbb{C}^2$ consider the coordinates
    \begin{equation*}
        z := q_1 + iq_2 \qquad \text{and} \qquad w := p_1 + i p_2.
    \end{equation*}
    Define
    \begin{equation*}
        \varphi \colon \mathbb{C}^2 \to \mathbb{C}^2, \qquad \varphi(z,w) := e^{2\pi i/3}(z,w).
    \end{equation*}
    We have that $\varphi^*\lambda = \lambda$ for
    \begin{equation*}
        \lambda = \frac{1}{2}(p_1 dq_1 - q_1 dp_1) + \frac{1}{2}(p_2 dq_2 - q_2 dp_2).
    \end{equation*}
    For every $0 < k < \frac{1}{6}$, the regular energy surface $H^{-1}(k)$ contains a strictly convex sphere-like component $\Sigma_k \cong \mathbb{S}^3$. The resulting quotient $\Sigma_k/\mathbb{Z}_3$ is diffeomorphic to the lens space $L(3,1)$, but not contactomorphic to it with the standard contact distribution. Here we write $L(m,k_2)$ for the lens space $\mathbb{S}^3/\mathbb{Z}_m$ from Example \ref{ex:star-shaped} with $k_1 = 1$. Instead, the quotient $\Sigma_k/\mathbb{Z}_3$ is contactomorphic to $L(3,2)$ with its standard contact distribution. This is mainly due to the use of different coordinates. By a shooting argument, one can show that there exist at least two $\mathbb{Z}_3$-symmetric periodic orbits on $\Sigma_k$. In fact, by \cite[Corollary~2.5]{schneider:lens:2020}, there exist infinitely many periodic orbits on $\Sigma_k$.
\end{example}

\begin{example}[{Hill's Lunar Problem, \cite[Section~5.8]{frauenfelderkoert:3bp:2018}}]
    \label{ex:lunar}
    The mechanical Hamiltonian function $H \colon T^*(\mathbb{R}^2 \setminus \{0\}) \to \mathbb{R}$ defined by
    \begin{equation*}
        H(q,p) := \frac{1}{2}\| p\|^2 - \frac{1}{\|q\|} + q_1p_2 - q_2p_1 - q_1^2 + \frac{1}{2}q_2^2 
    \end{equation*}
    \noindent is called \emph{Hill's lunar Hamiltonian}. After Levi--Civita regularisation the regularised Hill's lunar Hamiltonian $K \colon T^* \mathbb{R}^2 \to \mathbb{R}$ is given by
    \begin{equation*}
        K(q,p) = \frac{1}{2}\left(\| p \|^2 + \| q \|^2 \right) + 2\|q\|^2(q_2 p_1 - q_1 p_2) - 4(q_1^6 - 3q_1^4 q_2^2 - 3q_1^2 q_2^4 + q_2^6).
    \end{equation*}
    For $k > 0$ sufficiently small, the energy hypersurface $K^{-1}(k)$ admits at least two periodic orbits by \cite[Theorem~1]{roberto:hill:2011} and contains a strictly convex sphere-like component $\Sigma_k \cong \mathbb{S}^3$. On $T^*\mathbb{R}^2 \cong \mathbb{C}^2$ consider the coordinates
    \begin{equation*}
        z := q_1 + iq_2 \qquad \text{and} \qquad w := p_1 + i p_2
    \end{equation*}
    \noindent and the rotation
    \begin{equation*}
        \varphi \colon \mathbb{C}^2 \to \mathbb{C}^2, \qquad \varphi(z,w) := e^{\pi i/2}(z,w).
    \end{equation*}
    Then $K$ is invariant under the rotation $\varphi$ and thus $\Sigma/\mathbb{Z}_4$ is diffeomorphic to the lens space $L(4,1)$, but again due to the choice of nonstandard coordinates not contactomorphic to it. It is a delicate question in Contact Topology to decide the correct value of $k_2 \neq 1$, such that the obtained lens space $L(4,1)$ in Hill's lunar problem is contactomorphic to $L(4,k_2)$. The tight contact
structures on the lens spaces $L(m,k_2)$ are classified up to isotopy by \cite[Therorem~2.1]{honda:lens:2000}, so in principle it should be possible to obtain the correct value of $k_2$.
\end{example}

\begin{example}[Stark--Zeeman Systems]
    \label{ex:stark}
    Planar Stark--Zeeman systems as in \cite{cieliebak:J:2017} and \cite{cieliebak:two:2023} generalise many important physical systems including the diamagnetic Kepler problem and the restricted three body problem \cite{moreno:3bp:2022}. By \cite[Corollary~1]{cieliebak:J:2017}, for energy values below the first critical value, the Moser regularised energy hypersurfaces are diffeomorphic to the unit cotangent bundles $S^*\mathbb{S}^n$. In particular, for $n = 2$ we obtain $S^*\mathbb{S}^2 \cong \mathbb{RP}^3$,  a real projective space. 
\end{example}

Theorem \ref{thm:multiplicity} immediately follows from a more general result.

\begin{theorem}[Forcing]
	\label{thm:forcing}
	Let $\Sigma$ be a twisted stable displaceable hypersurface in a symplectically aspherical, geometrically bounded, symplectic manifold $(M,\omega)$ for a symplectomorphism $\varphi \in \Symp(M,\omega)$ of finite order $\ord(\varphi)$ and suppose that $v_0$ is a contractible twisted periodic Reeb orbit on $\Sigma$ belonging to a Morse--Bott component $C$. Then there exists a contractible twisted periodic Reeb orbit $v \notin C$ such that
	\begin{equation*}
		\int_{\mathbb{D}}\bar{v}^*\omega - \int_{\mathbb{D}} \bar{v}_0^*\omega \leq \ord(\varphi)e(\Sigma).
	\end{equation*}	
\end{theorem}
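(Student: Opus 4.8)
The plan is to adapt the Cieliebak--Frauenfelder argument for the vanishing of Rabinowitz--Floer homology under displaceability, but to extract quantitative information instead of merely a contradiction. The key point is that if $\Sigma$ admits no contractible twisted periodic Reeb orbit $v \notin C$ with action in the prescribed window, then one can build a perturbed twisted Rabinowitz action functional whose gradient has a uniform lower bound on the relevant energy level, which forces the continuation-map argument to degenerate; the displacement energy controls exactly how wide the action window can be. First I would set up the deformed functional: choose a compactly supported Hamiltonian $F \in C^\infty_c(M \times [0,1])$ displacing $\Sigma$ with $\|F\|$ close to $e(\Sigma)$, make it $\varphi$-invariant by averaging over the finite cyclic group generated by $\varphi$ (this costs nothing in the Hofer norm since the group acts by symplectomorphisms and we may symmetrize $\max/\min$ appropriately), and consider the homotopy $\mathscr{A}^{H}_{\varphi} \rightsquigarrow \mathscr{A}^{H}_{\varphi,F}$ interpolating to the functional perturbed by the displacing isotopy, exactly as in \cite[Section~3]{cieliebakfrauenfelder:rfh:2009} but in the twisted, stable setting developed in \cite{baehni:rfh:2023}.

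Next I would argue that critical points of the deformed functional $\mathscr{A}^{H}_{\varphi,F}$ correspond to twisted leaf-wise intersection points, and since $\varphi_F(\Sigma) \cap \Sigma = \varnothing$ there are none; hence $\grad \mathscr{A}^{H}_{\varphi,F}$ is nowhere zero. The crucial estimate is that on the hypersurface level $\{H \circ v \equiv 0\}$ (equivalently $\tau$-slices meeting the critical set of the unperturbed functional), the norm of this gradient is bounded below by a constant depending only on $\|F\|$; this is the twisted analogue of \cite[Lemma~3.9]{cieliebakfrauenfelder:rfh:2009}, and it is available here because $M$ is geometrically bounded and symplectically aspherical, so the usual energy--area identity and the a priori $C^0$ bounds on Floer cylinders (no sphere bubbling, no escape to infinity) hold verbatim for the twisted Rabinowitz--Floer equation. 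Combining this with the fundamental energy identity for a Floer continuation trajectory $u$ from a critical point $(v_0,\tau_0) \in C$,
\begin{equation*}
    E(u) = \mathscr{A}^{H}_{\varphi}(v_0,\tau_0) - \mathscr{A}^{H}_{\varphi}(v_1,\tau_1) + \int \bigl(\partial_s \text{-term involving } \partial_s F\bigr),
\end{equation*}
the $\partial_s F$ contribution is bounded by $\ord(\varphi)\,\|F\|$ after rescaling time by $\ord(\varphi)$ (this is where the factor $\ord(\varphi)$ in the statement enters: the twisted loop has genuine period $\ord(\varphi)$ when unwrapped, so the Hofer energy is spent $\ord(\varphi)$ times along one period). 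A broken-trajectory / Gromov compactness argument then produces a second critical point $v_1 \notin C$ of $\mathscr{A}^{H}_{\varphi}$ with
\begin{equation*}
    0 < \mathscr{A}^{H}_{\varphi}(v_0,\tau_0) - \mathscr{A}^{H}_{\varphi}(v_1,\tau_1) \leq \ord(\varphi)\,e(\Sigma),
\end{equation*}
and since $H \circ v \equiv 0$ along both orbits the action equals $\tfrac{1}{\ord(\varphi)}\int_{\mathbb{D}}\bar{v}^*\omega$, which rearranges to the claimed inequality; the strict positivity on the left comes from $v_1 \notin C$ together with $C$ being action-constant.

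The main obstacle I expect is the gradient lower bound away from the critical set, i.e.\ the twisted version of \cite[Lemma~3.9]{cieliebakfrauenfelder:rfh:2009}: one must show that a configuration $(v,\tau)$ with small $\|\grad \mathscr{A}^{H}_{\varphi,F}(v,\tau)\|$ and $v$ near $\Sigma$ is forced to be near a genuine twisted leaf-wise intersection point, and then invoke an implicit-function / continuity argument on the finite-dimensional fixed-point data $D^{\xi}(\phi^{R}_{-\tau}\circ\varphi)\vert_{v(0)}$ to contradict $\varphi_F(\Sigma)\cap\Sigma=\varnothing$. Making this uniform requires control of $\tau$, which comes from the usual dichotomy (either the orbit is short, bounded by the minimal twisted period, or $H\circ v$ is bounded away from zero so the second gradient component is large); the stable rather than contact hypothesis means one works with the stabilizing form $\lambda = i_X\omega\vert_\Sigma$ and the identity $\ker\omega\vert_\Sigma \subseteq \ker L_X\omega\vert_\Sigma$ in place of $d\lambda = \omega$, but the estimates of \cite{cieliebakfrauenfelderpaternain:mane:2010} show this suffices. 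Everything else — symmetrizing $F$, the energy identity, Gromov compactness in the geometrically bounded setting, and the absence of bubbling from asphericity — is by now standard and can be cited.
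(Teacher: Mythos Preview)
Your overall strategy is correct and matches the paper's: perturb the twisted Rabinowitz action functional by a displacing Hamiltonian $F$, use the absence of critical points for the perturbed functional to obtain a uniform lower bound on the gradient (the twisted analogue of \cite[Lemma~3.9]{cieliebakfrauenfelder:rfh:2009}), combine this with the $\tau$-bound coming from the auxiliary functional and the stabilising form as in \cite{cieliebakfrauenfelderpaternain:mane:2010}, and conclude by compactness. The ingredients you single out --- geometric boundedness for $C^0$-bounds, symplectic asphericity to exclude bubbling, the CFP machinery for the Lagrange-multiplier estimate --- are exactly those the paper uses.

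Two points need correction, however.

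\textbf{The averaging step is a genuine gap.} Replacing $F$ by $\tfrac{1}{m}\sum_{k}F\circ\varphi^{k}$ does keep the Hofer norm under control, but the time-$1$ map of the averaged Hamiltonian is \emph{not} any average or conjugate of $\varphi_{F}$, and there is no reason it still displaces $\Sigma$. The paper does not symmetrise $F$ at all: it takes any $F$ with $F_{t}=0$ for $t\in[0,\tfrac12]$ and works directly with the perturbation $\int_{0}^{1}F_{t}(v(t))\,dt$ on $\Lambda_{\varphi}M$; the energy identity (Step~1) and the action-window estimate (Step~4) are carried out at the level of the functional and require no equivariance of $F$.

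\textbf{The factor $\ord(\varphi)$ is misplaced.} In the paper's computation the $\partial_{s}F$-term contributes exactly $\|F\|$ to the energy bound, not $\ord(\varphi)\|F\|$; there is no ``unwrapping'' of the loop and no repetition of the Hofer energy. The factor $\ord(\varphi)$ appears only at the very end, when one passes from the action value $\mathscr{A}^{H}_{\varphi}(v,\tau)=\tfrac{1}{\ord(\varphi)}\int_{\mathbb{D}}\bar v^{*}\omega$ to the quantity in the theorem. Relatedly, your displayed inequality has the wrong sign and an unjustified strict lower bound: the paper's argument is by contradiction --- if \emph{every} critical point outside $C$ had action strictly above $\|F\|+\mathscr{A}^{H}_{\varphi}(v_{0},\tau_{0})$, then the one-parameter moduli space $\mathscr{M}$ (the ``homotopy of homotopies'') would be a compact $1$-manifold whose boundary is the single point $(v_{0},\tau_{0},0)$, which is absurd. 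The output is therefore some $(v,\tau)\notin C$ with $\mathscr{A}^{H}_{\varphi}(v,\tau)\le\|F\|+\mathscr{A}^{H}_{\varphi}(v_{0},\tau_{0})$; no positivity of the action difference is claimed, and ``$C$ action-constant'' does not by itself separate $C$ from other critical points by action.
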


\begin{remark}
    The case $(M,\Sigma) = (\mathbb{C}^n, \mathbb{S}^{2n - 1})$ with the rotation
    \begin{equation*}
	   \varphi \colon \mathbb{C}^n \to \mathbb{C}^n, \quad \varphi(z_1,\dots,z_n) = \left( e^{2\pi i k_1/m}z_1,\dots,e^{2\pi i k_n/m}z_n\right),
    \end{equation*}
    \noindent shows that the estimate in Theorem \ref{thm:forcing} is sharp.
\end{remark}

Applying Theorem \ref{thm:forcing} to the Morse--Bott component $\Fix(\varphi\vert_\Sigma)$ from Example \ref{ex:fix} yields the following corollary.

\begin{corollary}
    \label{cor:schlenk}
    Let $\Sigma$ be a twisted stable displaceable hypersurface in a symplectically aspherical, geometrically bounded, symplectic manifold $(M,\omega)$ for $\varphi \in \Symp(M,\omega)$ with $\Fix(\varphi\vert_\Sigma) \neq \varnothing$. Then there does exist a nonconstant contractible twisted periodic Reeb orbit $v$ such that
	\begin{equation*}
		\int_{\mathbb{D}}\bar{v}^*\omega \leq \ord(\varphi)e(\Sigma).
	\end{equation*}	
\end{corollary}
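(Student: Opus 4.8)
The plan is to obtain Corollary~\ref{cor:schlenk} as an immediate application of Theorem~\ref{thm:forcing} to the Morse--Bott component $C := \Fix(\varphi\vert_\Sigma)$, which is nonempty by hypothesis and which is a genuine Morse--Bott component of $\Crit(\mathscr{A}^H_\varphi)$ by Example~\ref{ex:fix}. The only preparatory observation is a concrete description of $C$: its points are exactly the constant twisted periodic Reeb orbits on $\Sigma$, and each such orbit sits at action level zero.

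First I would verify this identification. Given $x_0 \in \Fix(\varphi\vert_\Sigma)$, the constant path $\gamma \equiv x_0$ satisfies $\gamma(t + 1) = x_0 = \varphi(x_0) = \varphi(\gamma(t))$, hence $\gamma \in \mathscr{L}_\varphi\Sigma$; the associated loop $v_0(t) := \gamma(\ord(\varphi)t) \equiv x_0$ admits the constant filling $\bar{v}_0 \equiv x_0$, so $v_0 \in \Lambda_\varphi M$ with $\int_{\mathbb{D}}\bar{v}_0^*\omega = 0$, and $(v_0,0) \in \Crit(\mathscr{A}^H_\varphi)$ has vanishing action. Conversely, if $(v,\tau) \in \Crit(\mathscr{A}^H_\varphi)$ has $v$ constant, then the critical point equation $\dot{\gamma} = \tau R(\gamma) = 0$ forces $\tau = 0$, since the stable Reeb vector field $R$ is nowhere vanishing on $\Sigma$; and the twisted periodicity $\gamma(t) = \gamma(t + 1) = \varphi(\gamma(t))$ shows $v \in \Fix(\varphi\vert_\Sigma)$. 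Since $(M,\omega)$ is symplectically aspherical, $\int_{\mathbb{D}}\bar{v}^*\omega$ is moreover independent of the chosen filling $\bar{v}$, so the quantity appearing in Theorem~\ref{thm:forcing} is well-defined on $\Crit(\mathscr{A}^H_\varphi)$.

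Next I would fix any $v_0 \in C = \Fix(\varphi\vert_\Sigma)$ as above, a contractible twisted periodic Reeb orbit belonging to the Morse--Bott component $C$, and apply Theorem~\ref{thm:forcing}. It produces a contractible twisted periodic Reeb orbit $v \notin C$ with
\[
  \int_{\mathbb{D}}\bar{v}^*\omega - \int_{\mathbb{D}}\bar{v}_0^*\omega \leq \ord(\varphi)e(\Sigma),
\]
and since $\int_{\mathbb{D}}\bar{v}_0^*\omega = 0$ this is exactly the asserted estimate $\int_{\mathbb{D}}\bar{v}^*\omega \leq \ord(\varphi)e(\Sigma)$. Finally, $v \notin \Fix(\varphi\vert_\Sigma)$ combined with the identification above (every constant twisted periodic Reeb orbit lies in $\Fix(\varphi\vert_\Sigma)$) forces $v$ to be nonconstant, which completes the argument. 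I do not expect a genuine obstacle here: the analytic content --- compactness of the twisted Rabinowitz--Floer moduli spaces and the displacement-energy bound that produces the action gap --- is entirely packaged inside Theorem~\ref{thm:forcing}. The only point demanding care is the bookkeeping of the second paragraph, namely recognising $\Fix(\varphi\vert_\Sigma)$ as the locus of constant twisted periodic Reeb orbits and checking that these carry zero action, so that the action difference in Theorem~\ref{thm:forcing} collapses to $\int_{\mathbb{D}}\bar{v}^*\omega$ alone.
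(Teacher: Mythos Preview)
Your proposal is correct and follows exactly the paper's approach: the paper simply states that the corollary is obtained by applying Theorem~\ref{thm:forcing} to the Morse--Bott component $\Fix(\varphi\vert_\Sigma)$ from Example~\ref{ex:fix}. Your additional bookkeeping---identifying $\Fix(\varphi\vert_\Sigma)$ with the constant twisted periodic Reeb orbits and verifying that these have vanishing action so that the action difference collapses to $\int_{\mathbb{D}}\bar{v}^*\omega$---makes explicit what the paper leaves to the reader.
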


In particular, if we take $\varphi = \id_M$ in Corollary \ref{cor:schlenk}, we recover Schlenk's theorem as stated in \cite[Theorem~4.9]{cieliebakfrauenfelderpaternain:mane:2010} about the existence of contractible closed characteristics on stable, displaceable hypersurfaces with energy less or equal to the displacement energy of the hypersurface. Schlenk proved this result in \cite[Theorem~1.1]{schlenk:stable:2006} using quite different methods.

\begin{example}[Magnetic Torus]
    \label{ex:magnetic_forcing}
    We can apply Theorem \ref{thm:forcing} and its Corollary \ref{cor:schlenk} to the magnetic torus in Example \ref{ex:magnetic_torus}. Indeed, $(T^*\mathbb{T}^n,\omega_\rho)$ is geometrically bounded by Example \ref{ex:geometrically_bounded} and symplectically aspherical by Example \ref{ex:aspherical}. Moreover, $\Sigma_k$ is stable and displaceable for every energy value $k > 0$. Thus for every contractible twisted periodic Reeb orbit $v_0$ belonging to a Morse--Bott component, there does exist a contractible twisted periodic Reeb orbit $v$ with
    \begin{equation*}
        \int_0^1 v^* \lambda_\theta - \int_0^1 v_0^*\lambda_\theta \leq \ord(\varphi)e(\Sigma).
    \end{equation*}

\end{example}

Further applications of Theorem \ref{thm:forcing} and its Corollary \ref{cor:schlenk} are the content of the next section. The proof of Theorem \ref{thm:forcing} is given in Section \ref{sec:proof}. It is also the aim of future research to numerically investigate the Examples \ref{ex:henon}, \ref{ex:lunar} and \ref{ex:stark}, that is, finding upper bounds of the displacement energy and minimal periods.

\newpage

\section{Applications}
\subsection{The Hofer Distance and Relative Symplectic Capacities}
Computing the displacement energy \ref{def:e} is usually quite difficult. Sometimes it is possible to give upper bounds on the displacement energy as in \cite[Theorem~1]{irie:e:2014} or lower bounds as for any nonempty open subset $A \subseteq M$ of a symplectic manifold $(M,\omega)$ we have $e(A) > 0$ as in \cite[Theorem~1.1]{banyaga:e:2018}. Corollary \ref{cor:schlenk} has two immediate consequences. First, the existence of a nonconstant contractible twisted periodic Reeb orbit on any twisted stable displaceable hypersurface. Second, the existence of a lower bound for the displacement energy via the action value of this critical point. If the hypersurface is of contact type, this action value is precisely the period of the parametrised periodic Reeb orbit. We illustrate the usefulness of the second implication and give dynamical proofs of standard results. Recall, that a \emph{relative symplectic capacity on $\mathbb{R}^{2n}$} is a map $c$ which assigns to each subset $A \subseteq \mathbb{R}^{2n}$ a number $c(A) \in [0,+\infty]$ such that the following three properties hold \cite[p.~460]{mcduffsalamon:st:2017}.
\begin{enumerate}
    \item (\emph{Relative Monotonicity}) If there exists a symplectomorphism $\psi$ of $\mathbb{R}^{2n}$ such that $\psi(A) \subseteq B$, then $c(A) \leq c(B)$.
    \item (\emph{Conformality}) $c(\lambda A) = \lambda^2 c(A)$ for all $\lambda \in \mathbb{R}$.
    \item (\emph{Normalisation}) It holds that
    \begin{equation*}
        c(B^{2n}(r)) = c(Z^{2n}(r)) = \pi r^2 \qquad \forall r > 0,
    \end{equation*}
    \noindent for the closed ball of radius $r$
    \begin{equation*}
        B^{2n}(r) := \left\{(x,y) \in \mathbb{R}^{2n} : \|x\|^2 + \|y\|^2 \leq r^2\right\},
    \end{equation*}
    \noindent and the closed cylinder
    \begin{equation*}
        Z^{2n}(r) := \left\{(x,y) \in \mathbb{R}^{2n} : x_1^2 + y_1^2 \leq r^2\right\}.
    \end{equation*}
\end{enumerate}

\begin{proposition}[{\cite[Theorem~12.3.4]{mcduffsalamon:st:2017}}]
    \label{prop:e}
	The displacement energy $e$ is a relative symplectic capacity on $\mathbb{R}^{2n}$. 
\end{proposition}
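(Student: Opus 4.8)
The plan is to verify the three axioms of a relative symplectic capacity for $e$ on $\mathbb{R}^{2n}$, where relative monotonicity and conformality are elementary, and normalisation is the substantive point requiring Corollary \ref{cor:schlenk}. First I would dispatch relative monotonicity: if $\psi$ is a symplectomorphism of $\mathbb{R}^{2n}$ with $\psi(A) \subseteq B$, then for any $F \in C^\infty_c(\mathbb{R}^{2n} \times [0,1])$ displacing $B$, the conjugated Hamiltonian $F \circ (\psi^{-1} \times \id)$ displaces $\psi(A)$ hence $A$, and has the same Hofer norm since $\psi$ is a symplectomorphism (it preserves the sup and inf of each time slice); taking the infimum gives $e(A) \le e(B)$. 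Conformality is similar bookkeeping: the dilation $x \mapsto \lambda x$ pulls $\omega_0$ back to $\lambda^2 \omega_0$, so rescaling a displacing Hamiltonian for $A$ and reparametrising time (or rescaling the Hamiltonian itself by $\lambda^2$) produces a displacing Hamiltonian for $\lambda A$ whose Hofer norm is scaled by $\lambda^2$; this gives $e(\lambda A) = \lambda^2 e(A)$.

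The normalisation $e(B^{2n}(r)) = e(Z^{2n}(r)) = \pi r^2$ splits into an upper bound and a lower bound. By conformality it suffices to treat $r = 1$. For the upper bound $e(Z^{2n}(1)) \le \pi$ (which also bounds $e(B^{2n}(1))$ since $B^{2n}(1) \subseteq Z^{2n}(1)$ and monotonicity applies with $\psi = \id$), I would exhibit an explicit compactly supported Hamiltonian isotopy displacing a cylinder of capacity slightly larger than $\pi$ — the standard construction translates the cylinder in the $x_1$-direction using a Hamiltonian depending only on $y_1$, cut off away from the region of interest; the cost is governed by the area swept, which can be made arbitrarily close to $\pi$. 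Letting the cutoff recede gives $e(Z^{2n}(1)) \le \pi$, hence $e(B^{2n}(1)) \le \pi$ as well.

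The lower bound $e(B^{2n}(1)) \ge \pi$ is where Corollary \ref{cor:schlenk} enters, and I expect this to be the main obstacle in the sense of being the one genuinely dynamical input. Suppose $B^{2n}(1)$ is displaced by $F$ with $\|F\| < \pi$; I want a contradiction. Consider the boundary sphere $\Sigma = \mathbb{S}^{2n-1} = \partial B^{2n}(1)$, which is a stable (indeed contact-type) displaceable hypersurface in $(\mathbb{R}^{2n}, \omega_0)$; take $\varphi = \id_{\mathbb{R}^{2n}}$, so $\Fix(\varphi\vert_\Sigma) = \Sigma \ne \varnothing$ and $\ord(\varphi) = 1$. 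Here $(\mathbb{R}^{2n},\omega_0)$ is symplectically aspherical (it is exact) and geometrically bounded (it is $T^*\mathbb{R}^n$, a magnetic cotangent bundle with $\rho = 0$, cf.\ Example \ref{ex:geometrically_bounded}). Corollary \ref{cor:schlenk} then produces a nonconstant contractible periodic Reeb orbit $v$ on $\Sigma$ with $\int_{\mathbb{D}} \bar v^* \omega_0 \le e(\Sigma)$. The Reeb orbits on $\mathbb{S}^{2n-1}$ with the standard contact form are the Hopf circles, of action (period) exactly $\pi$, so the smallest action of a nonconstant closed characteristic is $\pi$, giving $\pi \le e(\mathbb{S}^{2n-1})$. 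It remains to relate $e(\mathbb{S}^{2n-1})$ to $e(B^{2n}(1))$: since $\mathbb{S}^{2n-1} = \partial B^{2n}(1)$, a Hamiltonian displacing $B^{2n}(1)$ also displaces $\mathbb{S}^{2n-1}$, so $e(\mathbb{S}^{2n-1}) \le e(B^{2n}(1))$, whence $e(B^{2n}(1)) \ge \pi$. (Alternatively, and more robustly, I would apply Corollary \ref{cor:schlenk} directly to a slightly shrunk sphere $\mathbb{S}^{2n-1}(1-\epsilon) \subseteq B^{2n}(1)$, whose Reeb orbits have action $\pi(1-\epsilon)^2$, and let $\epsilon \to 0$.) For $Z^{2n}(1)$: it contains $B^{2n}(1)$, so $e(Z^{2n}(1)) \ge e(B^{2n}(1)) \ge \pi$ by monotonicity. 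Combining with the upper bounds yields $e(B^{2n}(1)) = e(Z^{2n}(1)) = \pi$, completing the verification of normalisation and hence of the proposition.
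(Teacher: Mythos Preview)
Your proposal is correct and follows essentially the same route as the paper: dispose of monotonicity and conformality as routine, bound $e(\partial B^{2n}(r)) \le e(B^{2n}(r)) \le e(Z^{2n}(r)) \le \pi r^2$ by monotonicity together with an explicit displacement of the cylinder, and then obtain the matching lower bound by applying Corollary~\ref{cor:schlenk} with $\varphi = \id$ to the sphere, using that the minimal positive Reeb period on $\partial B^{2n}(r)$ is $\pi r^2$. The only cosmetic differences are that the paper works at general $r$ rather than reducing to $r=1$ via conformality, and that it cites \cite[Exercise~12.3.7]{mcduffsalamon:st:2017} for the upper bound on $e(Z^{2n}(r))$ where you sketch the construction.
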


\begin{proof}
    Relative monotonicity and conformality are not hard to show. Moreover, by relative monotonicity and \cite[Exercise~12.3.7]{mcduffsalamon:st:2017} we have that
	\begin{equation*}
		e(\partial B^{2n}(r)) \leq e(B^{2n}(r)) \leq e(Z^{2n}(r)) \leq \pi r^2 \qquad \forall r > 0.
	\end{equation*}
	The periodic Reeb flow on $\partial B^{2n}(r)$ is given by
	\begin{equation*}
		\phi^{R_r}_t(z) = e^{-2i t/r^2}z \qquad \forall z \in \partial B^{2n}(r).
	\end{equation*}
	Hence the parametrised periodic Reeb orbits are $(\phi^{R_r}(z),\tau)$ with $\tau \in \pi r^2 \mathbb{Z}$. But Corollary \ref{cor:schlenk} implies the existence of a nonconstant closed periodic Reeb orbit $(v,\tau)$ on the contact hypersurface $\partial B^{2n}(r)$ such that
	\begin{equation*}
		0 < \tau = \int_0^1 v^*\lambda \leq e(\partial B^{2n}(r)) \leq \pi r^2,
	\end{equation*}
    \noindent where
    \begin{equation*}
        \lambda := \frac{1}{2}\sum_{j = 1}^n (y_j dx_j - x_j dy_j).
    \end{equation*}
	This is only possible for $\tau = \pi r^2$ and the statement follows.
\end{proof}

\begin{proposition}[{\cite[Theorem~1.1]{banyaga:e:2018}}]
    \label{prop:nontriviality}
    For any subset $A \subseteq \mathbb{R}^{2n}$ with nonempty interior we have that $e(A) > 0$.
\end{proposition}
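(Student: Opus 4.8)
The plan is to reduce the statement to an application of Corollary \ref{cor:schlenk} (equivalently Theorem \ref{thm:forcing}) by producing, for any $A$ with nonempty interior, a small displaceable hypersurface inside $A$ whose twisted periodic Reeb orbits have a controlled, strictly positive action. Concretely, since $A$ has nonempty interior, after applying a translation (a symplectomorphism of $\mathbb{R}^{2n}$) I may assume $0 \in \Int A$, so that $B^{2n}(r) \subseteq A$ for some $r > 0$. Then $\partial B^{2n}(r)$ is a compact, connected, contact-type hypersurface in $\mathbb{R}^{2n}$, which is symplectically aspherical and geometrically bounded; I take $\varphi = \id$, so that $\Fix(\varphi\vert_{\partial B^{2n}(r)}) = \partial B^{2n}(r) \neq \varnothing$ serves as the Morse--Bott component, and $\ord(\varphi) = 1$.

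Next I would invoke the monotonicity property of the displacement energy for subsets of $\mathbb{R}^{2n}$ (relative monotonicity, as recalled just before Proposition \ref{prop:e}): since $B^{2n}(r) \subseteq A$, we get $e(\partial B^{2n}(r)) \leq e(B^{2n}(r)) \leq e(A)$ — in fact it suffices to observe $\partial B^{2n}(r) \subseteq A$ directly and use monotonicity for the boundary sphere. Now apply Corollary \ref{cor:schlenk}: there is a nonconstant contractible periodic Reeb orbit $(v,\tau)$ on $\partial B^{2n}(r)$ with
\begin{equation*}
    0 < \tau = \int_0^1 v^*\lambda = \int_{\mathbb{D}} \bar v^* \omega \leq e(\partial B^{2n}(r)) \leq e(A),
\end{equation*}
using that on the contact-type hypersurface the action equals the period, and that the period of a nonconstant Reeb orbit on $\partial B^{2n}(r)$ is strictly positive (indeed an integer multiple of $\pi r^2$, as computed in the proof of Proposition \ref{prop:e}). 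Hence $e(A) \geq \pi r^2 > 0$.

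The one point requiring care — and the main (minor) obstacle — is the hypothesis of Corollary \ref{cor:schlenk} that $\partial B^{2n}(r)$ be \emph{displaceable} by a compactly supported Hamiltonian symplectomorphism in $\mathbb{R}^{2n}$: this is classical (the sphere can be pushed off itself by a compactly supported Hamiltonian isotopy, which is exactly why $e(\partial B^{2n}(r)) < \infty$), and is precisely what is being used implicitly in the chain of inequalities in the proof of Proposition \ref{prop:e}. A second technical remark is that Corollary \ref{cor:schlenk} is stated for a hypersurface $\Sigma$ that is \emph{twisted stable}; with $\varphi = \id$, the stable (contact-type) sphere $\partial B^{2n}(r)$ is trivially twisted, and all contractible periodic Reeb orbits are honest closed characteristics. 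With these observations the argument is complete; if one prefers to avoid the translation one may instead apply the same reasoning to a small ball $B^{2n}(x_0, r) \subseteq \Int A$ centered at an interior point $x_0$, using that $e$ is translation-invariant and that a round sphere of radius $r$ has displacement energy $\pi r^2$ independent of its center.
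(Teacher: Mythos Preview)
Your proposal is correct and follows essentially the same route as the paper: find a small ball $B^{2n}(r)$ inside $A$, use monotonicity to bound $e(A)$ from below by $e(\partial B^{2n}(r))$, and obtain $e(\partial B^{2n}(r)) \geq \pi r^2$ from Corollary~\ref{cor:schlenk} --- the paper simply packages this last step as the normalisation clause of Proposition~\ref{prop:e} (whose proof is precisely that application of Corollary~\ref{cor:schlenk}) and then quotes it. The only cosmetic difference is that the paper first explicitly disposes of the non-displaceable case ($e(A)=+\infty$) and extends the definition of $e$ to non-compact $A$ by $e(A):=\sup\{e(K):K\subseteq A\text{ compact}\}$ before invoking monotonicity.
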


\begin{proof}
    If $A \subseteq M$ is not displaceable, we have that $e(A) = +\infty$ and thus there is nothing to show. Moreover, if $A$ is not compact, we define
    \begin{equation*}
        e(A) := \sup_{K \subseteq A \text{ $K$ compact}} e(K).
    \end{equation*}
    So we can assume that $A$ is displaceable by a compactly supported Hamiltonian symplectomorphism $\varphi_F \in \Ham_c(\mathbb{R}^{2n},dy \wedge dx)$. As $A$ is displaceable and has nonempty interior, there exists a closed ball $B(r)$ of radius $r$ such that
    \begin{equation*}
        \varphi_F(B(r)) \cap B(r) = \varnothing.
    \end{equation*}
    Since the displacement energy is a normalised relative symplectic capacity by Proposition \ref{prop:e}, we conclude that
    \begin{equation*}
        e(A) \geq e(B(r)) = \pi r^2 > 0.
    \end{equation*}
\end{proof}

\begin{corollary}[{Hofer Distance, \cite[Theorem~12.3.3]{mcduffsalamon:st:2017}}]
    On $\Ham_c(\mathbb{R}^{2n}, dy \wedge dx)$ define the \emph{Hofer distance}
    \begin{equation*}
		\rho(\varphi_0,\varphi_1) := \inf_{\varphi_F = \varphi_1 \circ \varphi_0^{-1}}\|F\|.
	\end{equation*}
    Then
    \begin{equation*}
        \rho(\varphi_0,\varphi_1) = 0 \quad \Rightarrow \quad \varphi_0 = \varphi_1 \qquad \forall \varphi_0,\varphi_1 \in \Ham_c(M,\omega),
    \end{equation*}
    \noindent that is, the Hofer distance is a metric on $\Ham_c(\mathbb{R}^{2n}, dy \wedge dx)$.
\end{corollary}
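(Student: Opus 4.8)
The plan is to argue by contraposition, exploiting that the displayed definition of $\rho$ depends on $\varphi_0,\varphi_1$ only through $\psi := \varphi_1 \circ \varphi_0^{-1}$. Thus the corollary reduces to the single assertion: if $\psi \in \Ham_c(\mathbb{R}^{2n},dy\wedge dx)$ and $\psi \neq \id$, then $\inf_{\varphi_F = \psi}\|F\| > 0$. Note that only nondegeneracy is claimed, so no bi-invariance or triangle inequality for $\rho$ is needed; everything is already built into the infimum.

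So I would begin by assuming $\psi \neq \id$ and picking a point $x \in \mathbb{R}^{2n}$ with $\psi(x) \neq x$; set $2\delta := \dist(x,\psi(x)) > 0$. By continuity of $\psi$ I can choose $0 < r < \delta$ small enough that $\psi(B^{2n}(x,r)) \subseteq B^{2n}(\psi(x),\delta)$, where $B^{2n}(y,r)$ denotes the closed ball of radius $r$ about $y$. An elementary triangle-inequality argument then gives $\psi(B^{2n}(x,r)) \cap B^{2n}(x,r) = \varnothing$: a common point $z$ would satisfy $\dist(x,z) \leq r < \delta$ and $\dist(z,\psi(x)) \leq \delta$, hence $2\delta = \dist(x,\psi(x)) < 2\delta$, a contradiction. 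Therefore the closed ball $B^{2n}(x,r)$ is displaced by $\psi$.

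Now for any $F \in C^\infty_c(\mathbb{R}^{2n}\times[0,1])$ with $\varphi_F = \psi$, the map $\varphi_F$ displaces $B^{2n}(x,r)$, so Definition \ref{def:e} gives $\|F\| \geq e(B^{2n}(x,r))$. By relative monotonicity (a translation is a symplectomorphism of $(\mathbb{R}^{2n},dy\wedge dx)$) together with the normalisation in Proposition \ref{prop:e}, $e(B^{2n}(x,r)) = e(B^{2n}(r)) = \pi r^2$; alternatively one may invoke Proposition \ref{prop:nontriviality} directly to get $e(B^{2n}(x,r)) > 0$. Taking the infimum over all such $F$ yields $\rho(\varphi_0,\varphi_1) = \inf_{\varphi_F = \psi}\|F\| \geq \pi r^2 > 0$, which is exactly the contrapositive of the claimed implication.

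There is essentially no deep obstacle once Proposition \ref{prop:e} — and hence, ultimately, Corollary \ref{cor:schlenk} and Theorem \ref{thm:forcing} — is available; the only matters requiring a little care are the elementary topological step producing a displaced ball from a single moved point, and the observation that the Hofer norm of \emph{any} representing Hamiltonian is bounded below by the displacement energy of that ball. The genuine content, the positivity $e(B^{2n}(r)) > 0$, has already been extracted from the forcing theorem, so this final statement is a short deduction.
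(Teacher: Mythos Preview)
Your argument is correct and follows essentially the same route as the paper: reduce to $\psi = \varphi_1 \circ \varphi_0^{-1} \neq \id$, produce a set with nonempty interior displaced by $\psi$, and bound $\rho$ below by its displacement energy via Proposition~\ref{prop:nontriviality} (or directly via Proposition~\ref{prop:e}). The only difference is cosmetic: the paper simply asserts the existence of such a set $A$ and invokes Proposition~\ref{prop:nontriviality}, whereas you explicitly construct a small ball and obtain the sharper quantitative bound $\rho \geq \pi r^2$.
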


\begin{proof}
    Let $\varphi \in \Ham_c(\mathbb{R}^{2n}, dy \wedge dx)$ be not equal to the identity. Thus there exists a set $A$ with nonempty interior such that $\varphi(A) \cap A = \varnothing$. Thus Lemma \ref{prop:nontriviality} implies
    \begin{equation*}
        \rho(\varphi,\id_{\mathbb{R}^{2n}}) \geq e(A) > 0.
    \end{equation*}
    This proves the statement.
\end{proof}

\begin{remark}
    In \cite[Corollary~1.2]{mcduff:e:1995}, these results are generalised to arbitrary symplectic manifolds.
\end{remark}

\subsection{Physical Systems and the Ma\~n\'e Critical Value}

\begin{proposition}
    \label{prop:tonelli}
    Let $(T^*M,dp \wedge dq, H)$ be a Hamiltonian system for a compact configuration space $M$ and define
    \begin{equation*}
        e_0(H) := \inf\left\{k \in \mathbb{R} : \pi_{T^*M}\left(H^{-1}(k)\right) = M\right\},
    \end{equation*}
    \noindent where $\pi_{T^*M} \colon T^*M \to M$ denotes the projection. Suppose that $\Sigma_k := H^{-1}(k)$ with $k < e_0(H)$ is a $\varphi$-twisted stable regular energy surface admitting a contractible twisted periodic Reeb orbit $(q_0,p_0)$ belonging to a Morse--Bott component $C$. Then there exists a contractible twisted periodic Reeb orbit $(q,p) \notin C$ such that
    \begin{equation*}
        \int_0^1 p(t)\dot{q}(t)dt - \int_0^1 p_0(t)\dot{q}_0(t)dt \leq \ord(\varphi) e(\Sigma_c).
    \end{equation*}
\end{proposition}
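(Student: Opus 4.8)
The plan is to read off Proposition~\ref{prop:tonelli} from Theorem~\ref{thm:forcing}, the only point requiring work being that a subcritical energy surface $\Sigma_k$ is displaceable. The ambient manifold $(T^*M, dp\wedge dq)$ is symplectically aspherical, since $dp\wedge dq = d(p\,dq)$ is exact, and geometrically bounded by Example~\ref{ex:geometrically_bounded} applied with vanishing magnetic form. Granting displaceability, Theorem~\ref{thm:forcing} applies to the twisted stable hypersurface $\Sigma_k$, the finite-order symplectomorphism $\varphi$, and the Morse--Bott component $C$ containing $(q_0,p_0)$; it produces a contractible twisted periodic Reeb orbit $v\notin C$, say with $v(t) = (q(t),p(t))$, such that
\begin{equation*}
	\int_{\mathbb{D}}\bar{v}^*\omega - \int_{\mathbb{D}}\bar{v}_0^*\omega \leq \ord(\varphi)\,e(\Sigma_k),
\end{equation*}
where $v_0(t) = (q_0(t),p_0(t))$ and $\bar v, \bar v_0$ are fillings (the integrals being filling-independent by asphericity).

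To prove displaceability: since $k < e_0(H)$, the definition of the infimum forces $\pi_{T^*M}(\Sigma_k)\neq M$, so I may pick a point $q^\ast\in M$ outside the compact set $\pi_{T^*M}(\Sigma_k)$ together with an open ball $U\ni q^\ast$ with $\overline{U}\cap\pi_{T^*M}(\Sigma_k) = \varnothing$. Because finitely many points of a connected manifold can be pushed into $U$ by an ambient isotopy, I can arrange a Morse function $g$ on $M$ whose critical points all lie in $U$; then $dg$ is nowhere vanishing on $M\setminus U$, in particular on $\pi_{T^*M}(\Sigma_k)$. The Hamiltonian $F := g\circ\pi_{T^*M}$ has purely vertical Hamiltonian vector field, with flow $\phi^F_t(q,p) = (q,\,p - t\,dg_q)$. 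Fixing an auxiliary metric and writing $R_0 := \max_{\Sigma_k}\|p\|$ and $c := \min_{q\in\pi_{T^*M}(\Sigma_k)}\|dg_q\| > 0$, one gets $\phi^F_T(\Sigma_k)\cap\Sigma_k = \varnothing$ for every $T > 2R_0/c$. Replacing $F$ by a function $G\in C^\infty_c(T^*M)$ that agrees with $F$ on a neighbourhood of the compact set $\bigcup_{t\in[0,T]}\phi^F_t(\Sigma_k)$ does not change the flow on $\Sigma_k$ up to time $T$, so $\phi^G_T$ still displaces $\Sigma_k$ and hence $e(\Sigma_k)\leq\|TG\| < +\infty$. (Alternatively one may simply quote the known displaceability of subcritical energy hypersurfaces in cotangent bundles.) This is the main — indeed the only nontrivial — step.

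It remains to rewrite the two action integrals. Since $\omega = d(p\,dq)$ and $\bar{v}$ restricts to $v$ on $\partial\mathbb{D}$, oriented as the boundary of $\mathbb{D}$ and parametrised by $e^{2\pi i t}\mapsto v(t)$, Stokes' theorem gives
\begin{equation*}
	\int_{\mathbb{D}}\bar{v}^*\omega = \int_{\partial\mathbb{D}}\bar{v}^*(p\,dq) = \int_0^1 p(t)\dot{q}(t)\,dt,
\end{equation*}
and likewise $\int_{\mathbb{D}}\bar{v}_0^*\omega = \int_0^1 p_0(t)\dot{q}_0(t)\,dt$. Substituting these into the estimate supplied by Theorem~\ref{thm:forcing} yields exactly
\begin{equation*}
	\int_0^1 p(t)\dot{q}(t)\,dt - \int_0^1 p_0(t)\dot{q}_0(t)\,dt \leq \ord(\varphi)\,e(\Sigma_k),
\end{equation*}
which is the assertion (the $\Sigma_c$ in the statement being a typo for $\Sigma_k$). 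Everything beyond the displaceability step is the verbatim application of Theorem~\ref{thm:forcing} together with Stokes' theorem.
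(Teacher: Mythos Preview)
Your proof is correct and follows essentially the same approach as the paper: verify that $(T^*M,dp\wedge dq)$ is symplectically aspherical and geometrically bounded, show that $\Sigma_k$ is displaceable for $k<e_0(H)$, and then invoke Theorem~\ref{thm:forcing}. The only difference is that the paper outsources the displaceability of $\Sigma_k$ to a citation (Contreras, Proposition~8.2) and omits the Stokes-theorem rewriting, whereas you carry out both steps explicitly; your Morse-function construction is exactly the standard argument behind that reference.
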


\begin{proof}
    We claim that $e(\Sigma_k) < +\infty$ for all $k < e_0(H)$. In particular, every energy hypersurface $\Sigma_k$ is displaceable in the geometrically bounded and symplectically aspherical symplectic manifold $(T^*M, dp \wedge dq)$ since $T^*M$ is an exact symplectic form with canonical Liouville form $pdq$. As $k < e_0(H)$, we can displace $\Sigma_k$ into the missing fibres. The explicit compactly supported Hamiltonian symplectomorphism achieving that is constructed in \cite[Proposition~8.2]{contreras:periodic:2006}. Hence if $\Sigma_k$ is twisted stable and $k < e_0(H)$, we conclude the existence of such a contractible periodic Reeb orbit from Theorem \ref{thm:forcing}.
\end{proof}

\begin{example}[Magnetic Torus]
    Let $M$ be a compact manifold and $\theta \in \Omega^1(M)$. Then the map
    \begin{equation*}
        \varphi_\theta \colon (T^*M, dp \wedge dq) \to (T^*M, \omega_{d\theta}), \qquad \varphi_\theta(q,p) := (q, p - \theta_q)
    \end{equation*}
    \noindent is an exact symplectomorphism. Indeed, for every $(q,p) \in T^*M$ and $v \in TT^*_{(q,p)}M$ we compute
    \begin{align*}
        (\varphi^*_{-\theta} \lambda)_{(q,p)}(v) &= \lambda_{(q,p + \theta_q)}\left(D\varphi_{-\theta}(v)\right)\\
        &= p\left(D\pi_{T^*M}(v)\right) + \theta_q\left(D\pi_{T^*M}(v)\right)\\
        &= (\lambda + \pi_{T^*M}^*\theta)_{(q,p)}(v),
    \end{align*}
    \noindent where $\lambda \in \Omega^1(T^*M)$ denotes the canonical Liouville form and $\varphi_{-\theta} \circ \varphi_\theta = \id_{T^*M}$. A mechanical Hamiltonian function
    \begin{equation*}
        H \colon (T^*M, \omega_{d\theta}) \to \mathbb{R}, \qquad H(q,p) = \frac{1}{2}\|p\|_{m^*}^2 + V(q),
    \end{equation*}
    \noindent for some potential function $V \in C^\infty(M)$ is transformed under $\varphi_\theta$ to a magnetic Hamiltonian function $H_\theta = \varphi_\theta^* H$ given by
    \begin{equation*}
        H_\theta \colon (T^*M, dp \wedge dq) \to \mathbb{R}, \qquad H_\theta(q,p) = \frac{1}{2}\|p - \theta_q\|^2_{m^*} + V(q).
    \end{equation*}
    In the case of a magnetic torus as in Example \ref{ex:magnetic_forcing}, we have that
    \begin{equation*}
        \theta_q(v) = \frac{1}{2}\langle q, Jv\rangle \qquad \forall (q,v) \in \mathbb{T}^n \times \mathbb{R}^n.
    \end{equation*}
    Thus if $k > 0$, the intersection of $\Sigma_k = H^{-1}_\theta(k)$ with $T_q^*\mathbb{T}^n$ is a sphere centred at $\theta_q$ for every $q \in \mathbb{T}^n$. For more details see \cite[Example~5.2]{abbondandolo:floer:2019}. Consequently, we have that $e_0 = 0$ and Proposition \ref{prop:tonelli} cannot be applied. Note that the Ma\~n\'e critical value $c$ is infinite in this case because a nonzero $\rho$ has no bounded primitives in $\mathbb{R}^n$.
\end{example}

\begin{remark}
    In the setting of Proposition \ref{prop:tonelli}, if $H$ is a Tonelli Hamiltonian function, that is, $H$ is strictly fibrewise convex and superlinear, then any stable energy level of $H$ does contain a periodic Reeb orbit by \cite{macarini:stable:2010}. See also \cite[Theorem~(iv)]{abbondandolo:lagrangian:2013}.
\end{remark}

\begin{remark}
    The proof of Proposition \ref{prop:tonelli} does not work for higher energy values in general. This is due to a theorem of Will Merry in \cite[Theorem~1.1]{merry:rfh:2011} and \cite[Remark~1.7]{merry:rfh:2011}. Let $H \in C^\infty(T^*M)$ be a Tonelli Hamiltonian function. Define the Ma\~n\'e critical value
    \begin{equation*}
        c := \inf_{\theta}\sup_{q \in \widetilde{M}}\widetilde{H}(q,\theta_q),
    \end{equation*}
    \noindent where the infimum is taken over all $1$-forms $\theta$ on the universal covering manifold $\widetilde{M}$ with $d\theta = \widetilde{\rho}$, and $\widetilde{H} \in C^\infty(T^*\widetilde{M})$ denotes the lift of $H$. We always have that
    \begin{equation*}
        c \geq e_0(H).
    \end{equation*}
    If $k > c$, then the Rabninowitz--Floer homology $\RFH_*(\Sigma_k,T^*M)$ of \cite{cieliebakfrauenfelderpaternain:mane:2010} is well-defined and does not vanish. In particular, $\Sigma_k$ is not displaceable. Thus we cannot apply Theorem \ref{thm:forcing} in that case.
\end{remark}

\section{Proof of Theorem \ref{thm:forcing}}
\label{sec:proof}

The proof of Theorem \ref{thm:forcing} uses a method called a ``homotopy of homotopies argument''. Fix $\varepsilon > 0$ and choose a Hamiltonian function $F \in C^\infty_c(M \times [0,1])$ satisfying 
\begin{equation*}
    \textstyle F_t = 0 \> \forall t \in [0,\frac{1}{2}], \qquad \|F\| < e(\Sigma) + \varepsilon \qquad \text{and} \qquad \varphi_F(\Sigma) \cap \Sigma = \varnothing.
\end{equation*}
This is possible by definition of the displacement energy \ref{def:e}. Next we need to carefully choose a twisted defining Hamiltonian function $H$ for the stable hypersurface $\Sigma$. We postpone the construction of this Hamiltonian function and explain the main idea of the proof. Choose a smooth family $(\beta_r)_{r \in [0,+\infty)}$ of cutoff functions $\beta_r \in C^\infty(\mathbb{R},[0,1])$ such that
\begin{equation*}
	\begin{cases}
		\beta_r(s) = 0 & |s| \geq r,\\
		\beta_r(s) = 1 & |s| \leq r - 1,\\
		s\beta'_r(s) \leq 0 & \forall s \in \mathbb{R},
	\end{cases}
\end{equation*}
\noindent for all $r \in [0,+\infty)$. Define a family of twisted Rabinowitz action functionals
\begin{equation*}
	\mathscr{A}_r \colon \Lambda_\varphi M \times \mathbb{R} \times \mathbb{R} \to \mathbb{R}
\end{equation*}
\noindent by
\begin{equation*}
	\mathscr{A}_r(v,\tau,s) := \mathscr{A}^H_\varphi(v,\tau) - \beta_r(s)\int_0^1 F_t(v(t))dt
\end{equation*}
\noindent for all $r \in [0,+\infty)$. Note that $\mathscr{A}_0 = \mathscr{A}^H_\varphi$. For a suitable $\varphi$-invariant $\omega$-compatible almost complex structure we consider the moduli space
\begin{equation*}
	\mathscr{M} := \{(u,\tau,r) \in C^\infty(\mathbb{R}, \mathscr{L}_\varphi M \times \mathbb{R}) \times [0,+\infty) : (u,\tau,r) \text{ solution of \eqref{eq:moduli_space}}\},
\end{equation*}
\noindent where
\begin{equation}
	\begin{cases}
	    \partial_s (u,\tau) = \grad \mathscr{A}_r\vert_{(u(s),\tau(s),s)} & \forall s \in \mathbb{R},\\
	    \displaystyle\lim_{s \to -\infty} (u(s),\tau(s)) = (v_0,\tau_0),\\
	    \displaystyle\lim_{s \to +\infty} (u(s),\tau(s)) \in C.
	\end{cases}
	\label{eq:moduli_space}
\end{equation}
Note that always $(v_0,\tau_0,0) \in \mathscr{M}$ and that such a $\varphi$-invariant $\omega$-compatible almost complex structure always exists by \cite[Lemma~5.5.6]{mcduffsalamon:st:2017}. The gradient $\grad \mathscr{A}_r$ of $\mathscr{A}_r$ is taken with respect to the metric
\begin{equation*}
    m\left((X,\eta),(Y,\sigma)\right) := \int_0^1 \omega(JX(t),Y(t))dt + \eta\sigma.
\end{equation*}

\begin{lemma}
    \label{lem:compact}
    If 
    \begin{equation}
        \label{eq:assumption_moduli_space}
        \mathscr{A}_0(v,\tau) > \|F\| + \mathscr{A}_0(v_0,\tau_0) \qquad \forall (v,\tau) \in \Crit(\mathscr{A}_0) \setminus C,
    \end{equation}
    \noindent then $\mathscr{M}$ is compact.
\end{lemma}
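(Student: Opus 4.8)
The claim is the compactness of the moduli space $\mathscr{M}$ of solutions of the homotopy-of-homotopies equation, under the a~priori action gap assumption \eqref{eq:assumption_moduli_space}. Compactness for this kind of space is always established in the same three-tier fashion: (i) a~priori $L^\infty$-bounds on the Lagrange multiplier $\tau$ and the homotopy parameter $r$; (ii) a~priori $C^0$-bounds on the loops $u(s,\cdot)$ forcing them to stay in a compact region of $M$; (iii) gradient bounds plus elliptic bootstrapping to upgrade to $C^\infty_{\mathrm{loc}}$-convergence, together with an exclusion of breaking at $\pm\infty$. I will carry these out in that order.

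\textbf{Step 1: energy and action estimates.} The fundamental identity is that along a solution $(u,\tau,r)\in\mathscr{M}$ one has
\begin{equation*}
\tfrac{d}{ds}\,\mathscr{A}_r(u(s),\tau(s),s) = -\,\|\partial_s(u,\tau)\|_m^2 + \partial_r\mathscr{A}_r\big|_{(u(s),\tau(s),s)}\,\dot r,
\end{equation*}
but since $r$ is constant along each element of $\mathscr{M}$ (the equation in \eqref{eq:moduli_space} only evolves $(u,\tau)$, with $r$ a parameter), the last term drops and $s\mapsto\mathscr{A}_r(u(s),\tau(s),s)$ is decreasing only up to the explicit $s$-dependence through $\beta_r$; the sign condition $s\beta_r'(s)\le 0$ is exactly what makes the correction term have a favourable sign, so that $\mathscr{A}_r(u(s),\tau(s),s)$ is sandwiched between $\mathscr{A}_0(v_0,\tau_0)$ at $s=-\infty$ and the value on $C$ at $s=+\infty$, giving a uniform energy bound $\int_{\mathbb{R}}\|\partial_s(u,\tau)\|_m^2\,ds \le \mathscr{A}_0(v_0,\tau_0) - \min_{C}\mathscr{A}_0 + \|F\|$. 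From the $\tau$-component of the gradient, $\dot\tau(s) = -\int_0^1 H(u(s,t))\,dt$, combined with the fact that $H$ is (by the careful choice still to be made) proper-like near $\Sigma$ and bounded, one extracts the $L^\infty$-bound on $\tau$: this is the standard Cieliebak--Frauenfelder argument using that $\mathscr{A}_r$ restricted to $\{H\circ u \approx 0\}$ is close to a period, while the geometry of $H$ controls excursions where $|H\circ u|$ is bounded away from zero. The bound on $r$ is where the action gap hypothesis enters: if $r$ were unbounded along a sequence of solutions, a limiting argument produces a solution with $r=+\infty$, i.e. $\beta_r\equiv 1$, which is a genuine negative gradient flow line of $\mathscr{A}_0 - \int F$ from $(v_0,\tau_0)$ to $C$; but the endpoints of such a line lie in $\Crit(\mathscr{A}_0)$ after a further limit, and a standard Stokes/action computation (as in Cieliebak--Frauenfelder) shows the action of the limit endpoint in $\Crit(\mathscr{A}_0)\setminus C$ would be $\le \|F\| + \mathscr{A}_0(v_0,\tau_0)$, contradicting \eqref{eq:assumption_moduli_space}. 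Hence $r$ stays in a compact interval $[0,r_0]$.

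\textbf{Step 2: $C^0$-bounds and bootstrapping.} With $\tau$ and $r$ bounded, geometric boundedness of $(M,\omega)$ furnishes the standard maximum-principle / isoperimetric argument (Cieliebak--Frauenfelder--Paternain, following McDuff--Salamon) that confines the images $u(s,t)$ to a fixed compact set: the $\varphi$-invariant $\omega$-compatible $J$ together with the curvature and injectivity-radius bounds give a monotonicity inequality ruling out escape to infinity, using that $F$ and $H$ are compactly supported resp. behave linearly outside a compact set. Once the solutions are confined to a compact region, with uniformly bounded energy, one has a uniform gradient bound $\|\partial_s(u,\tau)\|_{L^\infty}$ — the usual bubbling-off analysis shows no nonconstant $J$-holomorphic sphere can form because $(M,\omega)$ is symplectically aspherical, so $\int \omega$ over any would-be bubble vanishes — and then elliptic bootstrapping for the twisted Rabinowitz--Floer equation (a zeroth-order perturbation of a Cauchy--Riemann operator, with $\varphi$-equivariant boundary conditions) yields $C^\infty_{\mathrm{loc}}$-bounds. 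Finally, the no-breaking claim at the ends: near $s=-\infty$ the asymptotic is fixed to be $(v_0,\tau_0)$, and near $s=+\infty$ the limit lies in the Morse--Bott component $C$; the only possible loss of compactness is the splitting off of a gradient trajectory of $\mathscr{A}_0$ converging to a critical point not in $C$, and again the action gap \eqref{eq:assumption_moduli_space} forbids this, since any such broken configuration would have an intermediate critical point of action strictly between $\mathscr{A}_0(v_0,\tau_0)$ and $\|F\|+\mathscr{A}_0(v_0,\tau_0)$ on the wrong side of the inequality. Putting these together, every sequence in $\mathscr{M}$ has a $C^\infty_{\mathrm{loc}}$-convergent subsequence with limit again in $\mathscr{M}$, so $\mathscr{M}$ is compact.

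\textbf{Main obstacle.} The genuinely delicate point is Step~1's bound on $\tau$ in the \emph{twisted} and \emph{merely stable} (not contact-type) setting: in the contact case $\mathscr{A}_r$ at a near-critical configuration essentially equals $\tau$, but for a general stable hypersurface the stabilising form $\lambda$ only satisfies $\ker\omega|_\Sigma\subseteq\ker L_X\omega|_\Sigma$, so one must replicate the more subtle estimates of Cieliebak--Frauenfelder--Paternain (\cite{cieliebakfrauenfelderpaternain:mane:2010}) adapted to the twisted loop space, controlling $\tau$ through a combination of the energy bound, the bounded geometry, and the precise growth of the chosen defining Hamiltonian $H$ — which is exactly why the construction of $H$ was postponed. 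The bound on the homotopy parameter $r$ via the action gap is conceptually the heart of the argument but is by now routine once the $\tau$-bound is in hand.
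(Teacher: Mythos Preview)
Your overall architecture is sound, and you correctly identify the delicate point (the $\tau$-bound in the merely stable setting, requiring the auxiliary functional $\widehat{\mathscr{A}}_0$ of Cieliebak--Frauenfelder--Paternain). However, there is a genuine gap in your argument for the bound on the homotopy parameter $r$, and it reflects a misunderstanding of where the action-gap hypothesis \eqref{eq:assumption_moduli_space} actually enters.

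In the paper, the bound on $r$ has nothing to do with \eqref{eq:assumption_moduli_space}. It comes instead from the \emph{displaceability} of $\Sigma$: since $\varphi_F$ displaces $\Sigma$, the perturbed functional $\mathscr{A}^H_\varphi - \int_0^1 F_t\,dt$ has no critical points, and a quantitative version of this (as in \cite[Lemma~3.9]{cieliebakfrauenfelder:rfh:2009}) yields a uniform lower bound $\|\grad\mathscr{A}_r|_{(v,\tau,s)}\|_J \geq \delta > 0$. Combining this with the energy bound $E(u,\tau)\leq\|F\|$ (which you essentially obtain, once one uses that both asymptotic ends lie in the action-constant component $C$), one integrates over $|s|\le r$ to get $2r\delta^2 \le \|F\|$, hence $r \leq \|F\|/(2\delta^2)$. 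Your proposed limiting argument is problematic on two counts. First, it is circular: to pass to a limit as $r_k\to\infty$ you would already need a~priori compactness of $(u_k,\tau_k)$, hence the $\tau$-bound, which in the CFP scheme you invoke depends on the $r$-bound (the paper explicitly says ``the uniform bound on $\tau$ now follows from Steps~1 and~2''). Second, the limit object you describe --- a gradient line of $\mathscr{A}_0-\int F$ running from $(v_0,\tau_0)$ to $C$ --- makes no sense, since $(v_0,\tau_0)$ and points of $C$ are critical for $\mathscr{A}_0$, not for the perturbed functional; there is nothing for such a line to converge to.

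The hypothesis \eqref{eq:assumption_moduli_space} is used \emph{only at the very end}, to exclude breaking: once $C^\infty_{\mathrm{loc}}$-convergence of a subsequence is established, the limit $(u,\tau)$ has asymptotes $(w_\pm,\tau_\pm)\in\Crit(\mathscr{A}_0)$, and one shows directly (by the same integration-by-parts estimate you sketch) that $\mathscr{A}_0(w_\pm,\tau_\pm)\in[\mathscr{A}_0(v_0,\tau_0)-\|F\|,\,\mathscr{A}_0(v_0,\tau_0)+\|F\|]$; then \eqref{eq:assumption_moduli_space} forces $(w_\pm,\tau_\pm)\in C$. So the correct logical order is: energy bound $\Rightarrow$ $r$-bound (via displacement, \emph{not} via the action gap) $\Rightarrow$ $\tau$-bound (via the CFP auxiliary functional) $\Rightarrow$ $C^\infty_{\mathrm{loc}}$-compactness $\Rightarrow$ no breaking (via \eqref{eq:assumption_moduli_space}). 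Your reversal of the middle two steps and your attribution of the $r$-bound to the action gap are the errors to fix.
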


As a corollary of Lemma \ref{lem:compact} we get Theorem \ref{thm:forcing}. Indeed,  the moduli space $\mathscr{M}$ is the zero level set of a Fredholm section of a bundle over a Banach manifold. As $v_0$ belongs to a Morse--Bott component, the Fredholm section is regular at the point $v_0$, that is, the linearisation of the gradient flow equation is surjective there. By compactness, we can therefore perturb the Fredholm section to make it transverse. Hence $\mathscr{M}$ is a compact smooth manifold with boundary consisting precisely of the point $v_0$. See \cite[Appendix~A]{mcduffsalamon:J-holomorphic_curves:2012} for details. This is absurd, and we conclude that there exists a critical point $(v,\tau) \in \Crit(\mathscr{A}_0) \setminus C$ such that 
\begin{equation*}
	\mathscr{A}_0(v,\tau) - \mathscr{A}_0(v_0,\tau_0) \leq \|F\| < e(\Sigma) + \varepsilon.
\end{equation*}
As $\varepsilon > 0$ was arbitrary, the statement follows since
\begin{equation*}
    \mathscr{A}_0(v,\tau) = \frac{1}{\ord(\varphi)}\int_{\mathbb{D}} \bar{v}^*\omega \qquad \forall (v,\tau) \in \Crit(\mathscr{A}_0).
\end{equation*}

We prove Lemma \ref{lem:compact} in four steps.

\noindent \emph{Step 1: If $(u,\tau,r) \in \mathscr{M}$, then $E(u,\tau) \leq \|F\|$ for the energy}
\begin{equation*}
    E(u,\tau) := \int_{-\infty}^{+\infty} \|\partial_s (u,\tau)\|^2_Jds.
\end{equation*}
We estimate
\allowdisplaybreaks
\begin{align*}
	E(u,\tau) &= \int_{-\infty}^{+\infty} \|\partial_s (u,\tau)\|_J^2ds\\
	&= \int_{-\infty}^{+\infty} d\mathscr{A}_r(\partial_s (u,\tau),s) ds\\
	&= \int_{-\infty}^{+\infty} \frac{d}{ds}\mathscr{A}_r(u,\tau,s) ds - \int_{-\infty}^{+\infty} (\partial_s \mathscr{A}_r)(u,\tau,s)ds\\
	&= \lim_{s \to +\infty} \mathscr{A}_r(u,\tau,s) - \lim_{s \to -\infty} \mathscr{A}_r(u,\tau,s) - \int_{-\infty}^{+\infty} (\partial_s \mathscr{A}_r)(u,\tau,s)ds\\
	&= \mathscr{A}_0(v,\tau) - \mathscr{A}_0(v_0,\tau_0) - \int_{-\infty}^{+\infty} (\partial_s \mathscr{A}_r)(u,\tau,s)ds\\
	&= -\int_{-\infty}^{+\infty} (\partial_s \mathscr{A}_r)(u,\tau,s)ds\\
	&= \int_{-\infty}^{+\infty} \dot{\beta}_r(s)\int_0^1 F_t(u(s,t))dtds\\
	&\leq \|F\|_+ \int_{-\infty}^0 \dot{\beta}_r(s)ds - \|F\|_-\int_0^{+\infty}\dot{\beta}_r(s)ds\\
	&= \beta_r(0)(\|F\|_- + \|F\|_+)\\
	&= \beta_r(0)\|F\|\\
	&\leq \|F\|,
\end{align*}
\noindent as $\mathscr{A}_0(v,\tau) = \mathscr{A}_0(v_0,\tau_0)$ since $C$ is action-constant by definiton of a Morse--Bott component \ref{def:mb_component}. 
    
\noindent \emph{Step 2: There exists $r_0 \in \mathbb{R}$ such that $r \leq r_0$ for all $(u,\tau,r) \in \mathscr{M}$.} Crucial is the existence of a constant $\delta > 0$ such that
	\begin{equation*}
		\|\grad \mathscr{A}_r\vert_{(v,\tau,s)}\|_J \geq \delta \qquad \forall (v,\tau,s) \in \Lambda_\varphi M \times \mathbb{R} \times \mathbb{R}.
	\end{equation*}
	This is proven along the lines of \cite[Lemma~3.9]{cieliebakfrauenfelder:rfh:2009}. With this inequality and Step 1 we estimate
	\begin{equation*}
		\|F\| \geq E(u,\tau) \geq \int_{-r}^r \|\grad \mathscr{A}_r\vert_{(u(s),\tau(s),s)}\|^2_J ds \geq 2r\delta^2,
	\end{equation*}
	\noindent and thus we can set
	\begin{equation*}
		r_0 := \frac{\|F\|}{2\delta^2}.
	\end{equation*}

\noindent \emph{Step 3: There exists a constant $C > 0$ such that $\|\tau\|_\infty \leq C$ for all $(u,\tau,r) \in \mathscr{M}$.} This is a delicate estimate based on the explicit construction of the defining Hamiltonian $H$ for $\Sigma$ as well as an extension of the stabilising form $\lambda$. The bound on the Lagrange multiplier is derived by comparing the twisted Rabinowitz action functional to a different action functional. This modified version of the twisted Rabinowitz action functional is obtained using a suitable extension of the $\varphi$-invariant stabilising form $\lambda \in \Omega^1(\Sigma)$ to a compactly supported form $\beta_\lambda \in \Omega^1(M)$. The precise constructions can be found in \cite[Section~4.2.2]{cieliebakfrauenfelderpaternain:mane:2010}. Given $\beta_\lambda$, we can define the auxiliary action functional
\begin{equation*}
    \label{eq:auxiliary}
    \widehat{\mathscr{A}}_0\colon \Lambda_\varphi M \times \mathbb{R} \to\mathbb{R},\qquad \widehat{\mathscr{A}}_0(v,\tau) := \int_0^1 v^*\beta_\lambda - \tau \int_0^1 H(v(t))dt.
\end{equation*}
Moreover, we consider the bilinear form on the tangent bundle $T\Lambda_\varphi M \times \mathbb{R}$
\begin{equation*}
    \widehat{m}\left((X,\eta),(Y,\sigma)\right) := \int_0^1 d\beta_\lambda(JX(t),Y(t))dt + \eta\sigma.
\end{equation*}
The main point in the choice of the $\varphi$-invariant $H \in C^\infty(M)$, $\beta_\lambda \in \Omega^1(M)$ and the $\omega$-compatible $\varphi$-invariant almost complex structure $J$ is to make sure, that the properties 
\begin{enumerate}
    \item $d\widehat{\mathscr{A}}_0(v,\tau)(X,\eta) = \widehat{m}\left(\grad \mathscr{A}_0(v,\tau),(X,\eta)\right)$,
    \item $(m - \widehat{m})\left((X,\eta),(X,\eta)\right) \leq 0$,
\end{enumerate}
\noindent are true for all $(v,\tau) \in \Lambda_\varphi M \times \mathbb{R}$ and $(X,\eta) \in T_{(v,\tau)}\Lambda_\varphi M \times \mathbb{R}$. These two conditions ensure that the difference $\mathscr{A}_0 - \widehat{\mathscr{A}}_0$ is a Liapunov function for the negative gradient flow lines of the twisted Rabinowitz action functional $\mathscr{A}_0$. The uniform bound on the Lagrange multiplier $\tau$ now follows from Steps 1 and 2. For details see \cite[p.~1808]{cieliebakfrauenfelderpaternain:mane:2010}. The only subtle difference in our case is, that everything needs to be $\varphi$-invariant. However, this is no problem as we explain now. The construction of $H$, $\beta_\lambda$ and $J$ is based on the existence of a stable tubular neighbourhood of $\Sigma$, that is, a pair $(\rho_0,\psi)$ with $\rho_0 > 0$ and $\psi \colon (-\rho_0,\rho_0) \times \Sigma \hookrightarrow M$ an embedding such that
\begin{equation*}
    \psi\vert_{\{0\} \times \Sigma} = \iota_\Sigma \colon \Sigma \hookrightarrow M\qquad \text{and} \qquad \psi^*_\rho \omega = \omega\vert_\Sigma + d(\rho \lambda).
\end{equation*}
By \cite[Proposition~2.6~(a)]{cieliebakfrauenfelderpaternain:mane:2010}, the space of stable tubular neighbourhoods of $(\Sigma,\lambda)$ is nonempty. Using the quivariant Darboux--Weinstein Theorem \cite[Theorem~22.1]{guilleminsternberg:sg:1984}, we get the existence of a stable tubular neighbourhood $(\rho_0,\psi)$, satisfying
\begin{equation}
    \label{eq:twist}
    \varphi(\psi(\rho,x)) = \psi(\rho,\varphi(x)) \qquad \forall (\rho,x) \in (-\rho_0,\rho_0)\times \Sigma.
\end{equation}
Compare also \cite[Equation~(3.2)]{baehni:rfh:2023}. Hence the constructions \cite[p.~1791--1793]{cieliebakfrauenfelderpaternain:mane:2010} yield $\varphi$-invariant data $H$, $\beta_\lambda$ and $J$ due to \eqref{eq:twist}.

\noindent \emph{Step 4: Proof of Lemma \ref{lem:compact}.} Let $(u_k,\tau_k,r_k)$ be a sequence in the moduli space $\mathscr{M}$. By Step 2 and Step 3, the sequences $(r_k)$ and $(\tau_k)$ are uniformly bounded. Thus $(u_k,\tau_k,r_k)$ admits a $C^\infty_{\mathrm{loc}}$-convergent subsequence by standard arguments \cite[Theorem~B.4.2]{mcduffsalamon:J-holomorphic_curves:2012}. Indeed, the uniform $L^\infty$-bound on the sequence $(u_k)$ follows from the assumption that $(M,\omega)$ is geometrically bounded and the uniform $L^\infty$-bound on the derivatives $(Du_k)$ follows from the absence of bubbling as $(M,\omega)$ is symplectically aspherical. In particular, there cannot exist a nonconstant $J$-holomorphic sphere when the sequence of derivatives is unbounded \cite[Section~4.2]{mcduffsalamon:J-holomorphic_curves:2012}. Denote the limit of this subsequence by $(u,\tau,r)$. This limit clearly satisfies the first equation in \eqref{eq:moduli_space}, thus one only needs to check the asymptotic conditions in \eqref{eq:moduli_space}. Again by compactness, $(u,\tau)$ converges to critical points $(w_\pm,\tau_\pm)$ of $\mathscr{A}_0$ at its asymptotic ends. We claim that
	\begin{equation}
		\label{eq:action_value}
	    \mathscr{A}_r(u(s),\tau(s),s) \in \left[-\|F\| + \mathscr{A}_0(v_0,\tau_0), \|F\| + \mathscr{A}_0(v_0,\tau_0)\right] \qquad \forall s \in \mathbb{R}.
	\end{equation}
	In particular, $\mathscr{A}_0(w_\pm,\tau_\pm) \in \left[-\|F\| + \mathscr{A}_0(v_0,\tau_0), \|F\| + \mathscr{A}_0(v_0,\tau_0)\right]$. So if \eqref{eq:action_value} holds, then by assumption \eqref{eq:assumption_moduli_space} we conclude $(w_\pm,\tau_\pm) \in C$ and $\mathscr{M}$ is indeed compact. It remains to prove \eqref{eq:action_value}. It is enough to show
	\begin{equation*}
	    \mathscr{A}_r(u_k(s),\tau_k(s),s) \in \left[-\|F\| + \mathscr{A}_0(v_0,\tau_0), \|F\| + \mathscr{A}_0(v_0,\tau_0)\right] \qquad \forall s \in \mathbb{R}
	\end{equation*}
	\noindent for every $k \in \mathbb{N}$. As in the proof of \cite[Lemma~2.8]{albersfrauenfelder:rfh:2010} we estimate 
	\allowdisplaybreaks
	\begin{align*}
	    0 &\leq \int^{+\infty}_{s_0} d\mathscr{A}_r(\partial_s (u_k,\tau_k),s) ds\\
		&= \int^{+\infty}_{s_0} \frac{d}{ds}\mathscr{A}_r(u_k,\tau_k,s) ds - \int^{+\infty}_{s_0} (\partial_s \mathscr{A}_r)(u_k,\tau_k,s)ds\\
		&= \lim_{s \to +\infty} \mathscr{A}_r(u_k,\tau_k,s) - \mathscr{A}_r(u_k(s_0),\tau_k(s_0),s_0) - \int^{+\infty}_{s_0} (\partial_s \mathscr{A}_r)(u_k,\tau_k,s)ds\\
		&= \mathscr{A}_0(v,\tau) - \mathscr{A}_r(u_k(s_0),\tau_k(s_0),s_0) + \int^{+\infty}_{s_0} \dot{\beta}_r(s)\int_0^1 F_t(u_k(s,t))dtds\\ 
		&\leq \mathscr{A}_0(v,\tau) - \mathscr{A}_r(u_k(s_0),\tau_k(s_0),s_0) + \int^{+\infty}_{-\infty} \|\dot{\beta}_r(s)F\|_+ds\\
		&\leq \mathscr{A}_0(v,\tau) - \mathscr{A}_r(u_k(s_0),\tau_k(s_0),s_0) + \|F\|\\
		&= \mathscr{A}_0(v_0,\tau_0) - \mathscr{A}_r(u_k(s_0),\tau_k(s_0),s_0) + \|F\|
	\end{align*}
	\noindent for all $s_0 \in \mathbb{R}$. Similarly, we compute
	\allowdisplaybreaks
	\begin{align*}
	    0 &\leq \int_{-\infty}^{s_0} d\mathscr{A}_r(\partial_s (u_k,\tau_k),s) ds\\
		&= \int_{-\infty}^{s_0} \frac{d}{ds}\mathscr{A}_r(u_k,\tau_k,s) ds - \int_{-\infty}^{s_0} (\partial_s \mathscr{A}_r)(u_k,\tau_k,s)ds\\
		&= \mathscr{A}_r(u_k(s_0),\tau_k(s_0),s_0) - \lim_{s \to -\infty} \mathscr{A}_r(u_k,\tau_k,s) - \int_{-\infty}^{s_0} (\partial_s \mathscr{A}_r)(u_k,\tau_k,s)ds\\
		&= \mathscr{A}_r(u_k(s_0),\tau_k(s_0),s_0) - \mathscr{A}_0(v_0,\tau_0) + \int_{-\infty}^{s_0} \dot{\beta}_r(s)\int_0^1 F_t(u_k(s,t))dtds,
	\end{align*}
	\noindent and thus we estimate
	\begin{align*}
	    \mathscr{A}_r(u_k(s_0),\tau_k(s_0),s_0) &\geq \mathscr{A}_0(v_0,\tau_0) - \int_{-\infty}^{s_0} \dot{\beta}_r(s)\int_0^1 F_t(u_k(s,t))dtds\\
	    &\geq \mathscr{A}_0(v_0,\tau_0) - \int_{-\infty}^{+\infty} \|\dot{\beta}(s)F\|_+ds\\
	    &\geq \mathscr{A}_0(v_0,\tau_0) - \|F\|.
	\end{align*}
    This shows the estimate \eqref{eq:action_value} and so the proof of Lemma \ref{lem:compact} and Theorem \ref{thm:forcing} is complete. 

\section*{Acknowledgements}
To Colin, Neil and Jil. In memory of Will J.
Merry. A brilliant teacher and a guiding light. Without him I would have never met Urs Frauenfelder, Kai Cieliebak and Felix Schlenk.
\newpage
\addcontentsline{toc}{section}{References}
\printbibliography

\end{document}